\DeclareFontFamily{U}{mathx}{}
\DeclareFontShape{U}{mathx}{m}{n}{<-> mathx10}{}
\DeclareSymbolFont{mathx}{U}{mathx}{m}{n}
\DeclareMathAccent{\widehat}{0}{mathx}{"70}
\DeclareMathAccent{\widecheck}{0}{mathx}{"71}
\newtheorem{theorem}{Theorem}[section]
\newtheorem{corollary}[theorem]{Corollary} 
\newtheorem{proposition}[theorem]{Proposition}
\theoremstyle{definition}
\newtheorem{example}[theorem]{Example}
\theoremstyle{remark}
\newtheorem{remark}[theorem]{Remark}
\numberwithin{equation}{section}
\newcommand{\be}[1]{\begin{equation}\label{#1}}
\newcommand{\ee}{\end{equation}}
\DeclareMathOperator{\WF}{WF}
\newcommand{\Rn}{\mathbb{R}^n}
\newcommand{\rb}{\mathbb{R}}
\newcommand{\Sc}{\mathcal{S}}
\renewcommand{\d}{\,\mathrm{d}}
\renewcommand{\sb}{\mathbb{S}}
\newcommand{\Fc}{\mathcal{F}}
\newcommand{\I}{\mathrm{i}}
\DeclareMathOperator{\supp}{supp}
\newcommand{\PDO}{{\rm $\Psi$DO}}
\newcommand{\note}[1]{\!\!}
\begin{document}

\title{The Light ray transform for pseudo-Euclidean metrics}

\author{Divyansh Agrawal}
\address{Centre for Applicable Mathematics, Tata Institute of Fundamental Research, Bangalore, India and Department of Mathematics, Purdue University, West Lafayette, IN 47907}

\email{agrawald@tifrbng.res.in, agrdiv01@gmail.com}
\thanks{D.A. did this work as a SERB's Overseas Visiting Doctoral Fellow (OVDF) at Purdue University.}

\author{Plamen Stefanov}

\address{Department of Mathematics, Purdue University, West Lafayette, IN 47907}
\email{stefanov@math.purdue.edu}
\thanks{P.S.~partly supported by  NSF  Grant DMS-2154489.}

\subjclass[2020]{Primary 44A12, 53C65, 35R30}
\keywords{Light ray transform, null lines, pseudo-Euclidean space}
\date{}

\dedicatory{}

\begin{abstract}
	We study the ray transform $L$ over null (light) rays in the  pseudo-Euclidean space with signature $(n',n'')$, $n'\ge2$, $n''\ge2$. We analyze the normal operator $L'L$, derive an inversion formula, and prove stability estimates. We show that the symbol $p(\xi)$ is elliptic but singular at the light cone $\mathcal L$ with conormal singularities there.  We analyze  $L$ as a Fourier Integral Operator as well. Finally, we compare this to the Minkowski case.
\end{abstract}

\maketitle

\section{Introduction}
The purpose of this article is to study the light ray transform in the  pseudo-Euclidean space. We consider  $\rb^{n}$, where $n=n'+n''$, with $n',n'' \geq 2$. Throughout this article, we  identify $\Rn$ with $\rb^{n'} \times \rb^{n''}$, and denote the points in $\rb^{n}$ by $x = (x',x'') \in \rb^{n'} \times \rb^{n''}$, similarly for vectors and covectors. We assume that the metric $g$ is given by
$$
g = \mathrm{diag}(\underbrace{-1, \dots, -1}_{n'}, \underbrace{1, \dots, 1}_{n''}),
$$
and, in particular,   has signature $(n',n'')$. 
The case $n'=1$ (or $n''=1$ but not $n'=n''=1$ which is not interesting) gives us the Minkowski metric, and the light ray transform for it has been studied already even for more general Lorentzian metrics, see, e.g., \cite{MR1004174, Lauri-Light-21,LOSU-Light_Ray, S-support2014, Vasy-Wang-21, wang2017parametrices, SU-book, Yiran-Light-2021}.  
Consider the operator
\begin{equation}
Lf (\gamma) = \int_{\gamma} f \d s,
\end{equation}
which integrates $f$ along null-lines (also called light line/ray) with respect to $g$, satisfying  $g(\dot{\gamma}, \dot{\gamma}) = 0$, $\dot\gamma\not=0$, and, say, $f\in C_0^\infty(\rb^n)$.  Recall that any line in $\rb^{n}$ can be parameterized by 
\begin{equation}
t \mapsto x + t \theta, \quad \text{for } x, \theta \in \Rn.
\end{equation}
For such a line to be a null line, we require $g(\theta, \theta) = 0$, $\theta\not=0$, or $|\theta'| = |\theta''|\not=0$. Thus, we have the following parameterization for the light ray transform:
\begin{equation}\label{def-in}
Lf(x,\theta) = \int_{\rb} f(x+t\theta) \d t \quad \text{~ for ~} x,\theta \in \Rn, \text{~with~} |\theta'| = |\theta''|\not=0,
\end{equation}
and we require below $x\cdot\theta=0$  and  $|\theta'| = |\theta''|=1$. 
Thus, the transform studied here is the restriction of the usual ray transform to null-lines with respect to $g$, and a generalization of the Light ray transform. The problem of recovering $f$ from the knowledge of $Lf$ is formally overdetermined by  $(2n-3)-n=n-3$ variables since $Lf$ depends on $n-1+n-2$ variables, see \eqref{Sigma} below, and $f$ depends on $n$ variables.

The transform $L$ appears naturally as the bicharacteristic one, restricted to functions $f(x)$ on the base, related to the generalized ultra-hyperbolic operator $P=\Delta_{x'}-\Delta_{x''}$, see \cite{OSSU-principal}. In particular, its inversion would recover a compactly supported potential $V$ from Cauchy data for $P+V$ on a boundary of a smooth domain containing $\supp V$ by \cite[Theorem~1.2]{OSSU-principal}.  
Also, $L$ belongs to the class of the generalized Radon transforms, formulated first by Guillemin \cite{Guillemin85,GuilleminS}. Restricted versions of them to $n$-dimensional complexes of curves has been studied microlocally in \cite{Greenleaf_Uhlmann90,Greenleaf_UhlmannCM,Greenleaf-Uhlmann}. Its analytic version has been studied recently in \cite{mazzucchelli2023Analytic} establishing support theorems. The same transform has been studied in \cite{Ilmavirta-pseudo}, where it is shown that $L$ is injective on $\mathcal{S}(\rb^n)$ using the Fourier Slice Theorem. 

In sections~\ref{sec_3}, \ref{sec_4}, we analyze the normal operator $L'L$ and show that it is a convolution, hence a Fourier multiplier with some $p(\xi)$ which is positively homogeneous of order $-1$, continuous but singular at the light cone $\mathcal{L}$ in $T^*\rb^n$ with conormal singularities but still elliptic when either $n'\ge3$ or $n''\ge3$. In section~\ref{sec_co} we  compute the principal symbol of $p$ as a conormal distribution at $\mathcal{L}$.  When $n'=n''=2$, $p(\xi)$ has a logarithmic blow-up at   $\mathcal L$; still elliptic in the sense that it has a lower bound $C/|\xi|$. In both cases, we present an inversion formula and prove a stability estimate which requires a modified Sobolev space when $n'=n''=2$.  The symbol $p$ does not seem to have a closed-form formula, except in some particular dimensions (see section~\ref{example}), and we present several representations. 

In section~\ref{sec_FIO}, we analyze $L$ briefly as a Fourier Integral Operator (FIO) to fit it into the generalized Radon transform framework of Guillemin. We do not get deeply into this analysis since the singularity of $p(\xi)$ at the light cone suggests the need to use a specialized calculus which is beyond the scope of this work. We refer to the remarks following Theorem~\ref{thm_FIO}. Finally, in section~\ref{minkowski}, we give a brief comparison to the Minkowski case.

\textbf{Acknowledgments:} The authors thank Adri Olde Daalhuis for various discussions regarding Hypergeometric functions. 

\section{Setup and uniqueness}

\subsection{Setup} 
We call the cone 
\[
\mathcal{L}:= \{(\theta',\theta'')\in \rb^n|\; |\theta'|=|\theta''|\not=0\}
\]
the \textit{light cone} (in $T\rb^n$); and we use the same notation  for the cone   $|\xi'|=|\xi''|\not=0$ in the cotangent bundle since $g^{-1}$ is the co-metric formally looking the same as $g$. If we start with $\theta$ not restricted to be of fixed length, and  non-restricted $x$ in \eqref{def-in}, we would have, for $a \in \rb \setminus \{0\}$,  
\begin{equation}
Lf(x, a \theta) = \frac{1}{|a|} Lf(x, \theta).
\end{equation}
Due to this scaling property and the fact that $|\theta'| = |\theta''|$, we impose the restriction $|\theta'| = |\theta''| =1$. Thus, we now have 
\begin{equation}   \label{theta}
\theta = (\theta', \theta'') \in \sb^{n'-1} \times \sb^{n''-1}.
\end{equation}
We trivially have, analogous to the corresponding property of the X-ray transform, for $x, \theta \in \Rn$ and $s \in \rb$
\begin{equation}
Lf(x+s\theta, \theta) = Lf(x,\theta).
\end{equation}
We take $x\perp\theta$. Our operator now is defined as
\begin{equation}\label{def}
    Lf(x,\theta) = \int_{\rb} f(x+t\theta) \d t,
\end{equation}
where $\theta = (\theta',\theta'') \in \sb^{n'-1} \times \sb^{n''-1}$, $x= (x',x'') \in \rb^{n'} \times \rb^{n''}$ with $x\cdot\theta=0$. The variety of those $(x,\xi)$ will be called $\Sigma$, i.e.,  
\begin{equation}   \label{Sigma}
\Sigma := \left\{ (x,\theta)|\; x\in \Rn,\; \theta\in \sb^{n'-1}\times \sb^{n''-1},\; x\cdot\theta=0 \right\}.
\end{equation}
One can think of $\Sigma$ as the normal bundle of $\sb^{n'-1}\times \sb^{n''-1}$, with the standard smooth structure on it. We equip it with the measure $\d  H_\theta(x) \d S(\theta')\d S(\theta'')$, where $\d H_\theta(x)$ is the standard Lebesgue  measure on the plane  $\theta^\perp$  and $\d S(\cdot)$ denotes the surface measure on the unit spheres in the corresponding variable. 

\subsection{Fourier Slice Theorem} We now proceed to derive the Fourier Slice theorem for the operator \eqref{def}. To fix our notation, we recall the Fourier transform. For $f$  in the Schwartz class $\Sc(\Rn)$, the Fourier transform is defined as 
\begin{equation}
    \Fc f (\xi) = \widehat{f}(\xi) = \int_{\Rn} e^{-\I x \cdot \xi} f(x) \d x,
\end{equation}
and the inverse Fourier transform is given as
\begin{equation}
    \Fc^{-1} f (x) = \widecheck{f} (x) = (2\pi)^{-n} \int_{\Rn} e^{\I x \cdot \xi} f(\xi) \d \xi.
\end{equation}
We similarly define the Fourier and inverse Fourier transforms on a hyperplane through the origin, say $\theta^\perp$, denoted $\Fc_{\theta^\perp}$ and $\Fc^{-1}_{\theta^\perp}$ respectively. More precisely, for $x,\xi \in \theta^\perp$,
\begin{equation}
    \begin{split}
        \Fc_{\theta^\perp} f (\xi) &= \int_{\theta^\perp} e^{-\I x \cdot \xi} f(x) \d H_\theta(x), \\
        \Fc^{-1}_{\theta^\perp} f (x) &= (2\pi)^{1-n} \int_{\theta^\perp} e^{\I x \cdot \xi} f(\xi) \d H_\theta(\xi),
    \end{split}
\end{equation}
where $\d H_\theta$ denotes the natural $n-1$ dimensional (Lebesgue) measure on $\theta^\perp$. 

\begin{theorem}[Fourier Slice Theorem]   \label{thm_FS}
    For every $f \in L^1 (\Rn)$, $n'\ge2$, $n''\ge2$,
    \begin{equation}
       \hat f(\xi) = \int_{\theta^\perp } e^{-\I x\cdot\xi} Lf(x,\theta)\,\d H_\theta(x), \quad \text{whenever  $\xi\perp \theta$, $\theta\in  \sb^{n'-1}\times \sb^{n''-1} $}.
    \end{equation}
\end{theorem}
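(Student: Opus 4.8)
The plan is to prove this via the classical projection--slice (Fourier slice) identity: for fixed $\theta$, the function $x\mapsto Lf(x,\theta)$ on $\theta^\perp$ is the projection of $f$ onto the hyperplane $\theta^\perp$ along the direction $\theta$, and the $(n-1)$-dimensional Fourier transform of such a projection, evaluated at a covector annihilating the projection direction, reproduces the full $n$-dimensional Fourier transform $\widehat f$. Concretely, I would start from the right-hand side, substitute the definition \eqref{def} of $L$, and rewrite it as the iterated integral
\[
\int_{\theta^\perp}\!\!\int_{\rb} e^{-\I x\cdot\xi}\, f(x+t\theta)\, \d t\, \d H_\theta(x).
\]

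The first key step is the linear change of variables $y=x+t\theta$: since $x\cdot\theta=0$, the map $\theta^\perp\times\rb\ni(x,t)\mapsto x+t\theta\in\Rn$ is a bijection realizing the orthogonal splitting $\Rn=\theta^\perp\oplus\rb\theta$, and under it the product measure $\d H_\theta(x)\,\d t$ goes over to (a constant multiple of) Lebesgue measure $\d y$, the constant being the Jacobian of the splitting, which one reads off from $|\theta|$. The second key step uses the hypothesis $\xi\perp\theta$: then $\theta\cdot\xi=0$, so that $x\cdot\xi=(x+t\theta)\cdot\xi=y\cdot\xi$ and the oscillatory factor depends on $y$ alone. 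Combining the two substitutions collapses the integral to $\int_{\Rn}e^{-\I y\cdot\xi}f(y)\,\d y=\widehat f(\xi)$.

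The point that needs care --- and the reason the hypothesis is merely $f\in L^1(\Rn)$ rather than, say, $f\in C_0^\infty(\rb^n)$ --- is the measure-theoretic bookkeeping. Applying Tonelli to $|f|$ in the coordinates $(x,t)$ shows that for $H_\theta$-almost every $x\in\theta^\perp$ the slice $t\mapsto f(x+t\theta)$ lies in $L^1(\rb)$, so $Lf(\cdot,\theta)$ is defined a.e.\ on $\theta^\perp$ and in fact lies in $L^1(\theta^\perp,\d H_\theta)$; this makes the right-hand side absolutely convergent and licenses both the interchange of the $\d t$ and $\d H_\theta(x)$ integrations and the change of variables above. Beyond this, I do not expect a real obstacle: once the splitting $\Rn=\theta^\perp\oplus\rb\theta$ and the orthogonality $\xi\perp\theta$ are in place, the statement is essentially a single application of Fubini's theorem, valid for every admissible $\theta$ and every $\xi$ lying in the hyperplane $\theta^\perp$.
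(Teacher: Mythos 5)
Your approach --- substitute the definition \eqref{def} of $L$, use Tonelli to justify the interchange of the $t$- and $x$-integrals under the bare $L^1$ hypothesis, and then change variables through the orthogonal splitting $\rb^n=\theta^\perp\oplus\rb\theta$ --- is the standard proof of the projection-slice identity, and it is exactly what the paper implicitly invokes when it remarks that the result is ``just a restricted version of the classical Fourier Slice Theorem.'' The only real content of the paper's version beyond the classical one is that the set of admissible $\theta$ is restricted, which affects nothing in the proof.

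One thing you should nail down rather than gesture at: you correctly note that the Jacobian of $(x,t)\mapsto x+t\theta$ on $\theta^\perp\times\rb$ ``reads off from $|\theta|$,'' but then your last line drops the constant and asserts the integral collapses to $\widehat f(\xi)$ exactly. In the paper's normalization $\theta\in\sb^{n'-1}\times\sb^{n''-1}$, so $|\theta|^2=|\theta'|^2+|\theta''|^2=2$ and $\d t\,\d H_\theta(x)=|\theta|^{-1}\d y=\d y/\sqrt2$. Carried through, your computation gives
\[
\int_{\theta^\perp}e^{-\I x\cdot\xi}\,Lf(x,\theta)\,\d H_\theta(x)=\frac{1}{\sqrt2}\,\widehat f(\xi),\qquad \xi\perp\theta,
\]
i.e.\ the identity as stated in the theorem is off by an overall factor of $\sqrt2$ (the same $\sqrt2$ is silently dropped in the paper's derivation of the transpose formula \eqref{transpose}, so the paper is internally consistent and nothing downstream is affected --- the uniqueness corollary and the ellipticity/stability analysis of $p(\xi)$ only care about the multiplier up to a nonzero constant). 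Still, once you have explicitly named the Jacobian as coming from $|\theta|$, you must either evaluate it to $\sqrt2$ and keep it, or observe that here $|\theta|\neq1$ and flag the convention mismatch; silently setting it to $1$ is the one place your write-up is not self-consistent.
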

The theorem is just a restricted version of the classical Fourier Slice Theorem.

\subsection{Consequences of the Fourier Slice Theorem} We immediately obtain the following implications of the Fourier Slice Theorem, see also \cite{Ilmavirta-pseudo}.

\begin{corollary}[Uniqueness] \label{cor_uniq}
	$f\in L^1(\Rn)$ and $Lf=0$ imply $f=0$.
	\end{corollary}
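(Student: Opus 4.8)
The plan is to read the statement off from the Fourier Slice Theorem (Theorem~\ref{thm_FS}), together with the injectivity of the Fourier transform on $L^1(\Rn)$. If $Lf=0$, then for each admissible direction $\theta\in\sb^{n'-1}\times\sb^{n''-1}$ along which $Lf(\cdot,\theta)$ vanishes, Theorem~\ref{thm_FS} forces $\widehat f(\xi)=0$ for every $\xi\perp\theta$. So the argument reduces to the geometric observation that \emph{every} covector $\xi\in\Rn$ is orthogonal to some such $\theta$, plus a small measure-theoretic clean-up.

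First I would record that $Lf$ is meaningful for $f\in L^1$: by Tonelli, for each $\theta\in\sb^{n'-1}\times\sb^{n''-1}$,
\[
\int_{\theta^\perp}\int_{\rb}|f(x+t\theta)|\,\d t\,\d H_\theta(x)<\infty
\]
(it equals a fixed multiple of $\|f\|_{L^1(\Rn)}$), so $Lf(x,\theta)$ is defined for a.e.\ $x\in\theta^\perp$, $Lf(\cdot,\theta)\in L^1(\theta^\perp)$, and $Lf\in L^1(\Sigma)$; hence $Lf=0$ makes sense and, by Fubini on $\Sigma$, gives $Lf(\cdot,\theta)=0$ in $L^1(\theta^\perp)$ for $\theta$ in a full-measure set $A$. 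The geometric observation is where $n'\ge2$ and $n''\ge2$ enter: given $\xi=(\xi',\xi'')$, pick $\theta'\in\sb^{n'-1}$ with $\theta'\cdot\xi'=0$ and $\theta''\in\sb^{n''-1}$ with $\theta''\cdot\xi''=0$ (no constraint on a component whose $\xi'$, resp.\ $\xi''$, vanishes), which is possible since each sphere has dimension $\ge1$; equivalently, $(\theta',\theta'')\mapsto\xi'\cdot\theta'+\xi''\cdot\theta''$ has range $[-(|\xi'|+|\xi''|),\,|\xi'|+|\xi''|]\ni0$. Thus $\bigcup_{\theta\in\sb^{n'-1}\times\sb^{n''-1}}\theta^\perp=\Rn$.

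The one genuinely delicate point --- and the main, if mild, obstacle --- is passing from ``$\widehat f=0$ on $\bigcup_{\theta\in A}\theta^\perp$'' (with $A$ of full measure) to ``$\widehat f\equiv0$'', since the fiber $\{\theta:\theta\perp\xi\}$ over a fixed $\xi$ could in principle lie in the null set $A^{c}$. I would handle this using that $\widehat f$ is continuous (as $f\in L^1$): for any $x_0\neq0$ and $r>0$, the set $\{\theta\in\sb^{n'-1}\times\sb^{n''-1}:\operatorname{dist}(x_0,\theta^\perp)<r\}$ is open and, by the geometric observation, nonempty, hence of positive measure, hence meets $A$; so $\bigcup_{\theta\in A}\theta^\perp$ is dense in $\Rn$. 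A continuous function vanishing on a dense set vanishes identically, so $\widehat f\equiv0$ and, by Fourier uniqueness, $f=0$. (If one instead reads $Lf=0$ as holding for every $\theta$, the density step can be skipped, $\widehat f$ vanishing on all of $\Rn$ at once.)
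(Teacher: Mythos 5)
Your proof is correct and follows essentially the same route as the paper: the Fourier Slice Theorem together with the geometric observation that every $\xi\in\Rn$ is orthogonal to some $\theta\in\sb^{n'-1}\times\sb^{n''-1}$ (which is where $n',n''\ge2$ enters). The extra measure-theoretic clean-up you add (Tonelli to make $Lf$ well-defined for $f\in L^1$, and the density-plus-continuity step to handle a full-measure set $A$ of directions) is a harmless and legitimate refinement, needed only if one reads ``$Lf=0$'' as an a.e.\ equality on $\Sigma$; the paper's proof implicitly takes $Lf(\cdot,\theta)=0$ for every $\theta$ and skips this.
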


\begin{proof} 
	We follow \cite{Ilmavirta-pseudo} here. 
Given $\xi\in\Rn$, we want to show that the equation $\xi'\cdot\theta' + \xi''\cdot\theta''=0$ has a solution  $\theta\in \sb^{n'-1}\times \sb^{n''-1}$. Assume, without loss of generality, that $|\xi'|\le |\xi''|$. Choose any $\theta'\in  \sb^{n'-1}$. Then $|\xi'\cdot\theta' |\le |\xi'| \le |\xi''|$, hence we can find $\theta''\in \sb^{n''-1}$ so that $ \xi''\cdot\theta''=- \xi'\cdot\theta'$.

By Theorem~\ref{thm_FS}, $\hat f(\xi)=0$ for every $\xi$, which completes the proof. 
\end{proof} 	

\section{The normal operator} \label{sec_3}
\subsection{The adjoint and the normal operators} The operator $L:C_0^\infty(\rb^n)\to C_0(\Sigma)$ is continuous, see \cite[sec.~2.8]{Friedlander1998}. Then it has a sequentially continuous transpose $L':\mathcal{D}'(\Sigma) \to \mathcal{D}'(\rb^n)$. We compute $L'$ below on the dense set $C_0^\infty(\Sigma)$. 

\begin{theorem}\label{th:normal}
	Let $n', n'' \geq 2$. The transpose of $L$ is given by 
	\begin{equation}\label{transpose}
		L' \phi (z) = \int_{\sb^{n'-1}} \int_{\sb^{n''-1}} \phi(z- {2^{-1}}(z \cdot \theta) \theta, \theta) \d S(\theta'') \d S(\theta'), \quad \forall \phi\in C_0^\infty(\Sigma). 
	\end{equation}
The normal operator is
	\begin{align}\label{normal}
		L'Lf(x) &= 2 \int_{\sb^{n'-1}} \int_{\sb^{n''-1}} \int_{0}^{\infty} f\left ( x + t \theta \right) \d  t \d S(\theta'') \d S(\theta'), \quad \forall f\in C_0^\infty(\rb^n). 
	\end{align}
\end{theorem}
\begin{proof}
	 For every $\phi\in C_0^\infty(\Sigma)$, we have 
\begin{equation}
    \begin{split}
        \langle Lf, \phi \rangle &= \int_{\sb^{n'-1}} \int_{\sb^{n''-1}}   \int_{\theta^\perp} Lf (x, \theta) \phi (x, \theta) \d H_\theta(x)  \d S(\theta'') \d S(\theta')\\
        &= \int_{\sb^{n'-1}} \int_{\sb^{n''-1}}   \int_{\theta^\perp} \int_{\rb} f (x+ t \theta) \phi (x, \theta) \d t \d H_\theta(x) \d S(\theta'') \d S(\theta') \\
        &= \int_{\sb^{n'-1}} \int_{\sb^{n''-1}}   \int_{\Rn} f (z) \phi (z- {2^{-1}}(z\cdot\theta)\theta, \theta)\d z \d S(\theta'') \d S(\theta')   .    
    \end{split}
\end{equation}
This proves \eqref{transpose}.  To prove \eqref{normal}, write 
\begin{align}
	L'Lf(x) &= \int_{\sb^{n'-1}} \int_{\sb^{n''-1}} \int_{\rb} f\left ( x - {2^{-1}}(x\cdot  \theta )  \theta + t \theta \right) \d  t \d S(\theta'') \d S(\theta') \\
	&= \int_{\sb^{n'-1}} \int_{\sb^{n''-1}} \int_{\rb} f\left ( x + t \theta \right) \d  t \d S(\theta'') \d S(\theta').  \label{L'Lf}
\end{align}
Let us split the integral in $t$ variable into $\{t < 0\}$ and $\{t> 0\}$. Since $n',n'' \geq 2$, we can make the changes of variables $\theta' \mapsto -\theta'$ and $\theta'' \mapsto -\theta''$ to obtain \eqref{normal}. 
\end{proof}
\begin{remark}
Theorem~\ref{th:normal} makes sense when (only) one of $n'$ or $n''$ equals $1$ as well with the integral over $\sb^0=\{-1,1\}$ interpreted as a sum, see \cite{SU-book}.
\end{remark}
 
 We can extend $L$ to the space of compactly supported distributions $\mathcal{E}'(\rb^n)$ by duality, as usual. 

\subsection{The Schwartz kernel of the normal operator}
Let $\phi \in C_0^\infty(\rb_+)$. Let us define $\phi_\epsilon (t) = \epsilon^{-1} \phi(t/\epsilon)$, $\epsilon>0$. Then $\phi_\epsilon\to\delta$ in $\mathcal{S}'(\rb)$, as $\epsilon\to 0$.
For $f\in \mathcal{S}(\Rn)$, 
\begin{align*}
    L'Lf(x) &= 2 \lim_{\epsilon \to 0} \int_{\sb^{n'-1}} \int_{\sb^{n''-1}} \int_{0}^\infty \int_{\rb}  \phi_\epsilon(s) f ( x+((t+s)\theta',t\theta'' )) \d s \d  t \d S(\theta'') \d S(\theta').
\end{align*}
In the $(t,\theta'')$ integral, make the change of variables $y''=t\theta''$ to get
\begin{align*}
    L'Lf(x) &= 2 \lim_{\epsilon \to 0} \int_{\rb^{n''}} \int_{\sb^{n'-1}} \int_{\rb} \phi_\epsilon(s) \frac{f(x'+(|y''|+s)\theta', x''+y'')}{|y''|^{n''-1}} \d s \d S(\theta') \d y'' \\
    &= 2 \lim_{\epsilon \to 0} \int_{\rb^{n''}} \int_{\sb^{n'-1}} \int_{\rb} \phi_\epsilon(s-|y''|) \frac{f(x'+s\theta', x''+y'')}{|y''|^{n''-1}} \d s \d S(\theta') \d y'' \\
    &= 2 \lim_{\epsilon \to 0} \int_{\rb^{n''}} \int_{\rb^{n'} } \phi_\epsilon(|y'|-|y''|) \frac{f\left( x'+y',x''+y''\right)}{|y'|^{n'-1}|y''|^{n''-1}} \d y' \d y'',
\end{align*}
where in the  last equality, we switched to polar coordinates in the first variable. Next,
\begin{align}
    L'Lf(x) &= 2 \lim_{\epsilon \to 0}  \int_{\rb^{n''}} \int_{\rb^{n'} } \phi_\epsilon \left(|y'-x'|-|y''-x''|\right) \frac{f\left(y\right)}{|y'-x'|^{n'-1}|y''-x''|^{n''-1}} \d y' \d y'' \\
    &= 2 \lim_{\epsilon \to 0}  f \ast \frac{\phi_\epsilon(|\cdot'|-|\cdot''|)}{|\cdot '|^{n'-1} |\cdot ''|^{n''-1}}. \label{lim}
\end{align}

We want to interpret the limit above as a delta function. Set $F(x) = |x'|-|x''|$. Then $\nabla F = (x'/|x'|, -x''/|x''|)$. Therefore, $F$ is smooth, with $|\nabla F|^2=2$, in some neighborhood of $\{F=0, \, x\not=0\}$ not containing the origin. Then away from $x=0$, we have $\lim_{\epsilon\to0} \phi_\epsilon(|x'|-|x''|)=\delta(|x'|- |x''|)$, which, in particular, is positively homogeneous w.r.t.\ $x$ of order $-1$. Then the convolution kernel in \eqref{lim} is positively homogeneous of order $-n+1$ (away from $x=0$). It has a unique extension to $\Rn$ as such a distribution, see \cite[Theorem 3.2.3]{Hormander1}. On the other hand, 
 \eqref{normal} implies  that the Schwartz kernel of $L'L$ is positively homogeneous of order $-n+1$, therefore it equals that extension. Indeed, given a function $h$, denote temporarily $h_\lambda=h(\lambda x)$, $\lambda >0$. Then by \eqref{normal}, $L'Lf_\lambda = \lambda^{-1}(L'Lf)_{\lambda} $, $\forall \lambda>0$. This implies that the convolution kernel $K$ of $L'L$ satisfies $K_\lambda = \lambda^{1-n} K$, as claimed.  With that understanding, we replace the limit above by a delta (with the understanding that we made sense of the whole expression below, not just of its numerator) to get the following.

\begin{theorem}\label{thm_L''L}
	$L'L$ is a convolution with
	\[
	2\frac{\delta(|x'|-|x''|)}{|x '|^{n'-1} |x ''|^{n''-1}}.
	\]
\end{theorem}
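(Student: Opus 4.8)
The groundwork for this statement is essentially the displayed computation running from \eqref{normal} down to \eqref{lim}, so the plan is to organize that computation into a clean argument and then supply the one genuinely delicate step, namely the passage to a distributional limit across $x=0$.

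First I would start from the closed formula \eqref{normal} for $L'L$ and regularize: fix $\phi\in C_0^\infty(\rb_+)$ with $\int\phi=1$, set $\phi_\epsilon(t)=\epsilon^{-1}\phi(t/\epsilon)$ so that $\phi_\epsilon\to\delta$ in $\mathcal S'(\rb)$, and write, for $f\in\mathcal S(\Rn)$,
\[
L'Lf(x)=2\lim_{\epsilon\to0}\int_{\sb^{n'-1}}\int_{\sb^{n''-1}}\int_0^\infty\int_\rb \phi_\epsilon(s)\,f\bigl(x+((t+s)\theta',t\theta'')\bigr)\,\d s\,\d t\,\d S(\theta'')\,\d S(\theta'),
\]
which is legitimate since the inner $s$-integral is continuous near $s=0$. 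Then I would unwind the measures in three substitutions: $y''=t\theta''$ in the $(t,\theta'')$-variables (Jacobian $|y''|^{1-n''}$), the shift $s\mapsto s-|y''|$, and finally polar coordinates $y'=s\theta'$ in the $(s,\theta')$-variables (Jacobian $|y'|^{1-n'}$). After also replacing $y$ by $y-x$, this identifies
\[
L'Lf = 2\lim_{\epsilon\to0}\, f\ast \frac{\phi_\epsilon(|\cdot'|-|\cdot''|)}{|\cdot'|^{n'-1}|\cdot''|^{n''-1}},
\]
so $L'L$ is convolution with the limiting kernel, provided the limit makes sense as a distribution on $\Rn$.

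The remaining point is to interpret that limit globally. Away from the origin, $F(x)=|x'|-|x''|$ is smooth with $|\nabla F|^2=2\neq0$ on a neighborhood of $\{F=0\}\setminus\{0\}$, so $\phi_\epsilon(F)\to\delta(F)$ there and the candidate kernel $2\,\delta(|x'|-|x''|)/(|x'|^{n'-1}|x''|^{n''-1})$ is a well-defined distribution on $\Rn\setminus\{0\}$, positively homogeneous of degree $-n+1$. Since $-n+1$ is not one of the exceptional exponents $-n,-n-1,\dots$, by \cite[Theorem 3.2.3]{Hormander1} it has a \emph{unique} extension to a homogeneous distribution of the same degree on all of $\Rn$; that extension is what the statement refers to. To see that it coincides with the Schwartz kernel $K$ of $L'L$, note that \eqref{normal} gives the scaling $L'Lf_\lambda=\lambda^{-1}(L'Lf)_\lambda$ for $f_\lambda(x)=f(\lambda x)$, $\lambda>0$, which forces $K$ to be homogeneous of degree $1-n$ as a distribution on $\Rn$. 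Being a homogeneous extension of the explicit kernel off the origin, $K$ must equal the unique such extension, which proves the theorem.

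I expect the only real obstacle to be this last step: a priori a homogeneous distribution on $\Rn\setminus\{0\}$ can have several homogeneous extensions across $0$, so one must argue that the particular extension produced by the mollification is the one realized by $L'L$. The resolution is structural rather than computational — $L'L$ is translation-invariant and exactly homogeneous of degree $1-n$, and this degree avoids the exceptional values, so Hörmander's uniqueness pins it down and no renormalization constant enters. A minor technical point to check along the way is that the change to polar coordinates in the $\theta'$ variable is valid with the convolution structure retained, and that the mollified convolutions indeed converge, which follows from $\phi_\epsilon(F)\to\delta(F)$ near the cone together with homogeneity controlling the behavior near the origin.
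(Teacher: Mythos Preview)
Your proposal is correct and follows essentially the same route as the paper: the same regularization by $\phi_\epsilon$, the same three changes of variables to reach the convolution form \eqref{lim}, and the same endgame via homogeneity of degree $1-n$ together with \cite[Theorem~3.2.3]{Hormander1} to identify the kernel uniquely across the origin. Your explicit remark that $1-n$ avoids the exceptional exponents $-n,-n-1,\dots$ is a useful clarification the paper leaves implicit.
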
 

The denominator above can be replaced by $|x'|^{n-2}$ or by $|x''|^{n-2}$ .

\subsection{The normal operator as a Fourier multiplier} 
Since $L'L$ is a convolution, it is a  Fourier multiplier with $p(\xi)$ equal to the Fourier transform of its convolution kernel. In particular, $p$ is positively homogeneous of degree $-1$. We will compute $p$ in the following way. 
By \eqref{L'Lf}, for $f\in C_0^\infty(\rb^n)$,  
\begin{align*}
    \widehat{L'L f}(\xi) &= \int_{\rb^{n}} e^{-\I x \cdot \xi} (L'L f)(x) \d x \\
    &=  \int_{\rb^{n}} e^{-\I x \cdot \xi} \int_{\sb^{n'-1}} \int_{\sb^{n''-1}} \int_{\rb} f\left ( x + t \theta \right) \d  t \d S(\theta'') \d S(\theta') \d x.
\end{align*}
The multiple integral is not absolutely convergent in general. In order to make it such, we introduce the factor $e^{-\epsilon t^2/2}$ and eventually we will take the limit $\epsilon\to 0+$. Then the limit of the modified integral would equal the integral above, both considered as distributions in the $\xi$ variable (as we will see below, we always get a locally $L^1$ function of $\xi$ but when $n'=n''=2$, it is not bounded).  Indeed, $\langle \widehat{L'L f},\psi\rangle = \langle L'Lf,\hat\psi\rangle $ for every $\psi\in\mathcal{S}(\rb^n)$, with the pairing on the right represented by an absolute convergent integral. Then regularizing the Schwartz kernel of $L'L$ as above creates a family convergent  to $\langle L'Lf,\hat\psi\rangle $ by the dominated convergence theorem.   
  With the change of variable $x \mapsto x-t\theta$, we obtain
\begin{align*}
    \widehat{L'L f}(\xi) &= \lim_{\epsilon\to 0+} \int_{\sb^{n'-1}} \int_{\sb^{n''-1}} \int_{\rb} \int_{\rb^{n}} e^{-\I x \cdot \xi + \I  t\theta \cdot \xi -\epsilon t^2/2} f(x) \d x \d t \d S(\theta'') \d S(\theta') \\
    &=  \widehat{f}(\xi)\lim_{\epsilon\to 0+}  \int_{\sb^{n'-1}} \int_{\sb^{n''-1}} \int_{\rb} e^{ \I  t\theta \cdot \xi -\epsilon t^2/2 } \d t \d S(\theta'') \d S(\theta')\\
    &=  \widehat{f}(\xi)\lim_{\epsilon\to 0+}  \int_{\sb^{n'-1}} \int_{\sb^{n''-1}}  
    (2\pi/\epsilon)^{1/2} e^{ - (\xi \cdot\theta)^2/(2\epsilon) }   \d S(\theta'') \d S(\theta')\\
    & = \hat f(\xi) p(\xi),
\end{align*}
with the last identity defining $p(\xi)$ as a distribution. The limit is a delta at $\xi\cdot\theta$ multiplied by $2\pi$, thus the multiplier that we are looking for is given by
\begin{equation}\label{p1}
    p(\xi ) = 2\pi \int_{\sb^{n'-1}} \int_{\sb^{n''-1}} \delta(\xi' \cdot \theta' + \xi'' \cdot \theta'') \d S(\theta') \d S(\theta'').
\end{equation}
The function $\psi:=  \xi\cdot\theta$ has differential 
\[
\d_\theta\psi = (\xi'-(\xi'\cdot\theta') \theta', \xi''-(\xi''\cdot\theta'') \theta'')
\]
on  $\sb_{\theta'}^{n'-1}\times \sb_{\theta''}^{n''-1}$   vanishing on the null set when $\xi' =\lambda \theta'$, $\xi''= -\lambda \theta''$ with some $\lambda$ (and then  $|\xi'|=|\xi''|$). Thus $\delta(\psi) = \delta(\xi\cdot\theta)$ is a distribution with respect to $\theta\in\sb_{\theta'}^{n'-1}\times \sb_{\theta''}^{n''-1} $ smoothly depending on the parameter $\xi\not\in\mathcal{L}$.  Then its action on each test function (and \eqref{p1} is such) is a smooth function of $\xi\not\in\mathcal{L}$ as well. This follows, for  example, by regarding $\delta(\xi\cdot\theta)$ as the pullback of $\delta$ under the map $\theta\mapsto s=\xi\cdot\theta\in \rb$, see \cite[Theorem~7.2.1]{Friedlander1998}.

The delta above is not correctly defined as a function of $\theta$ as a pullback for $\xi$ on the light cone but the meaning of \eqref{p1}, as already mentioned, is
\begin{equation}   \label{pphi}
\langle p , \phi \rangle = 2\pi \int_{\sb^{n'-1}} \int_{\sb^{n''-1}}\int \delta(\xi \cdot \theta)\phi(\xi) \d\xi \d S(\theta') \d S(\theta''),\quad \forall \phi\in C_0^\infty(\rb^n).
\end{equation} 
The inner integral above is in distribution sense (a Radon transform), defining a smooth function of $\theta\in \sb ^{n'-1}\times \sb ^{n''-1}$.

Formally applying the Funk-Hecke theorem 
\[
\int_{\sb^{n-1}} \phi(x\cdot\theta) \d S(\theta) = |\sb^{n-2}|.|x|^{2-n} \int_{\rb} (|x|^2-t^2)_+^{(n-3)/2} \phi(t)\d t, \quad n\ge2,
\]
 we find 
    \begin{align} 
    p(\xi) &= C_{n',n''} |\xi'|^{2-n'} |\xi''|^{2-n''} \int_{\rb} \int_{\rb} (|\xi'|^2 - u^2)_+^{(n'-3)/2} (|\xi''|^2-v^2)_+^{(n''-3)/2} \delta(u+v) \d u \d v \\
    &= C_{n',n''} |\xi'|^{2-n'} |\xi''|^{2-n''} \int_{\rb} (|\xi'|^2 - u^2)_+^{(n'-3)/2} (|\xi''|^2-u^2)_+^{(n''-3)/2}  \d u ,   \label{eq:p}
\end{align}
where $C_{n',n''} =2\pi |\sb^{n'-2}|.|\sb^{n''-2}|$. Equation \eqref{eq:p} actually holds in distribution sense as it can be seen by applying the Funk-Hecke theorem to the action of $p$ on test functions. It defines a function positively homogeneous of order $-1$ in $\xi$ as we established earlier,  and \eqref{eq:p} holds in the way written for $\xi'\not=0$, $\xi''\not=0$. The cases of one of them only being zero can be handled as a limit. 

We have, using the evenness of the integrand in \eqref{eq:p}, 
\begin{align}
	p(\xi) &= C_{n',n''} |\xi'|^{2-n'}  |\xi''|^{2-n''} \int_{\rb} (|\xi'|^2 - u^2)_+^{(n'-3)/2} (|\xi''|^2 - u^2)_+^{(n''-3)/2} \d u \\
	&= 2C_{n',n''}|\xi'|^{2-n'} |\xi''|^{2-n''} \int_{0}^{\infty} (|\xi'|^2 - u^2)_+^{(n'-3)/2} (|\xi''|^2 - u^2)_+^{(n''-3)/2} \d u.
\end{align}
Set 
\begin{equation}   \label{kappa}
	\kappa = |\xi'|/|\xi''|.
\end{equation}
Then $\kappa-1$ (or $\kappa^2-1$) is a defining function of the light cone $\mathcal L$.  We use the notation $p=p_{n',n''}$ below. With the substitution $u = |\xi'|s$, we have  
\begin{equation}   \label{pp}
p_{n',n''}(\xi) = 2C_{n',n''} |\xi''|^{-1} \int_0^\infty (1-s^2)_+^{(n'-3)/2} (1-\kappa^2 s^2)_+^{(n''-3)/2} \d s.
\end{equation}

\subsection{Examples}\label{example} We consider several special cases of $n'$ and $n''$. One of our goals is to get an idea of the singularity across the light cone $\mathcal L$. We investigate these singularities in section~\ref{sec_co} from the point of view of conormal singularities. 
\begin{example}\label{Ex34}
Consider $n'=n''=3$ first. Then 
\[
p_{3,3}(\xi) = 16\pi^3 \frac{\min(|\xi'|,|\xi''|)}{ |\xi'|.|\xi''| } = \frac{16\pi^3 }{ \max(|\xi'|,|\xi''|) }
\]
since in this case, each factor in the integral  \eqref{eq:p} is a Heaviside function.   
Since $ |\xi|/\sqrt2\le \max(|\xi'|,|\xi''|) \le |\xi|$, we have
\[
\frac{16  \pi^3}{|\xi|}     \le p_{3,3}(\xi) \le \frac{16\sqrt 2 \pi^3}{|\xi|}.
\]
In particular, this shows that the singularity at $\xi=0$ is locally integrable,  thus contributing to a smoothing operator. Next, $p$ is continuous but not smooth, even after cutting away a neighborhood of $\xi=0$; therefore not a symbol of a pseudo-differential operator. However, $p$ is smooth away from the  light cone $\mathcal L$. 
Its behavior as $|\xi|\to\infty$  (but not of its $\xi$-derivatives) is like $\sim |\xi|^{-1}$. In particular, one gets
\begin{equation}   \label{est}
\|f\|_{L^2(\Omega)}/C \le \|L'Lf\|_{H^1(\rb^n)} \le C \|f\|_{L^2(\Omega)},\quad \forall f\in L^2(\Omega),
\end{equation}
see  \eqref{st2} since $|\xi| p_{3,3}(\xi)$ has positive lower and upper bounds. Here, we view $L^2(\Omega)$ as a subspace of $L^2(\rb^n)$ by using an extension as zero. The proof is the same as in \cite{SU-book}.  
\end{example}

\begin{example}
We will investigate  $p_{n',n''}$ when one of the dimensions is $3$, say $n''=3$. Note that $p_{n',n''}(\xi', \xi'') = p_{n'',n'}(\xi'',\xi')$ by \eqref{eq:p}.   
Let us assume first that $\kappa \in (0,1)$. As before, we obtain
\begin{align}
p_{n',3}(\xi) &= 2 C_{n',3} |\xi''|^{-1} \int_0^1 (1-s^2)^{(n'-3)/2} \d s \\
&= 2 C_{n',3} |\xi''|^{-1} \int_{0}^{\pi/2} \cos^{n'-2} \theta \d \theta = \frac{\sqrt{\pi} \Gamma((n'-1)/2)}{\Gamma(n'/2)}  C_{n',3} |\xi''|^{-1}, \label{n'3}
\end{align}
where $\Gamma$ is the Gamma function. Now, let us consider $\kappa \in (1,\infty)$. In this case, setting $s=\sin\theta$, we find
\begin{align}\label{pn3}
    p_{n',3}(\xi) &= 2 C_{n',3} |\xi''|^{-1} \int_{0}^{1/\kappa} (1-s^2)^{(n'-3)/2} \d s \\
    &= 2 C_{n',3}  |\xi''|^{-1} \int_{0}^{\arcsin(1/\kappa)} \cos^{n'-2} \theta \d \theta, \quad \kappa>1. 
\end{align}
Comparing to \eqref{n'3}, we see that $p_{n',3}$ is continuous across the light cone given by $\kappa=1$. It is not smooth, however. Indeed, we have 
\begin{equation} 
p_{n',3}(\xi) =  2 C_{n',3} |\xi''|^{-1} \begin{cases}
  G(\pi/2) & \text{if $\kappa\le 1$},\\
 G(\arcsin(1/\kappa))& \text{if $\kappa>1$},
\end{cases}
\end{equation}
where $G(s):= \int_0^s \cos^{n'-2} \theta \d \theta$. The function $\arcsin(1/\kappa)$ can be regularized near $\kappa=1$ with the substitution $\kappa=1+t^2$ with $t$ near $0$. We have that  $t\mapsto \arcsin(1/(1+t^2))$ is smooth for $t\ge0$ up to $t=0$, and has a non-trivial Taylor expansion at $t=0$ valid for $t\ge0$. In fact, its derivative is $-2(1+t^2)^{-1} (2+t^2)^{-1/2}$ which has even an analytic extension to $t<0$. Then $G(\arcsin(1/\kappa))$ has a non-trivial (meaning non-constant) expansion in terms of $\sqrt{\kappa-1}$ near $\kappa=1$. This shows that $p_{n',3}$ is smooth all the way to the light cone  $\mathcal L$ on both sides of it but not smooth across it. In particular, it has a conormal singularity there.

The dependence on the parity of $n'$ becomes clear by looking at \eqref{pn3}. Evaluating the last integral for particular values of $n'$, we obtain the following. 

When $n'=2$, $G(s)=s$, and 
\[
p_{2,3}(\xi) =   2 C_{2,3} |\xi''|^{-1} 
\begin{cases}
 \pi/2 & \text{if $\kappa\le 1$},\\
\arcsin(1/\kappa)& \text{if $\kappa>1$},
\end{cases}
\]
with $C_{2,3}=8\pi^2$.  
This symbol is continuous at $\kappa=1$ but the derivative from $\kappa>1$ has a singularity of the kind $(1-\kappa^2)^{-1/2}$.

When $n'=3$, $G(s)=\sin s$, and we get
\[
p_{3,3}(\xi) = 2 C_{3,3} \max(|\xi'|, |\xi''|)^{-1} 
\]
with $C_{3,3}=(2\pi)^3$, as in Example~\ref{Ex34}.  

When $n'=4$, $G(s)=\frac14 \sin(2s)+s/2$. Then  
\[
p_{4,3}(\xi) = 2 C_{4,3} |\xi''|^{-1} 
\begin{cases}
 \pi/4 & \text{if $\kappa\le 1$},\\
\frac{1}{2} \frac{\sqrt{\kappa^2 -1}}{\kappa^2} + \frac{1}{2} \arcsin{\frac{1}{\kappa}}& \text{if $\kappa>1$},
\end{cases} 
\]
where $C_{4,3}=(2\pi) (2\pi)(4\pi)$.

When $n'=5$, 
\[
p_{5,3}(\xi) = 2 C_{5,3} |\xi''|^{-1} 
\begin{cases}
 2/3 & \text{if $\kappa\le 1$},\\
\left( \frac{1}{\kappa} - \frac{1}{3\kappa^3} \right)& \text{if $\kappa>1$}.
\end{cases}
\]
One can show that the square roots (which are a part of the expansion of $\arcsin(1/\kappa)$ as well) appear for $n'$ even only. This also follows from the analysis in section~\ref{sec_co}. 
\end{example}

\section{Properties of $p(\xi)$. Inversion and stability} \label{sec_4}
\subsection{Properties of $p(\xi)$} 

We recall the definition of the hypergeometric function 
\begin{equation}   \label{hyper}
{}_2F_1(a,b;c;z) = \sum_{k=0}^\infty \frac{(a)_k(b)_k}{(c)_k} \frac{z^k}{k!},
\end{equation}
where $(a)_k:= a(a+1)\dots (a+k-1)$ when $k\ge1$, and $(a)_0=1$. We collect some properties in the following  {see \cite[Theorems~2.1.3,~2.2.2]{Andrews-SF}}. 

\begin{proposition}\label{pr_H}\cite{Andrews-SF}
	The series \eqref{hyper} converges absolutely when $|z|<1$ and $c\not\in \{0,-1,-2,\dots\}$. It	has the following behavior  as $z\to 1-$.
	\begin{itemize}
		\item[(a)] If $\Re(c-a-b) > 0$, then 
		\[
		{}_2F_1(a,b;c;1) = \frac{\Gamma(c) \Gamma(c-a-b)}{\Gamma(c-a) \Gamma(c-b)}.
		\]
		In particular, ${}_2F_1(a,b;c;z)$ is it continuous up to $z=1$ from the left by Abel's theorem.  
		\item[(b)] If $c=a+b$, then 
		\[
		\lim_{z \to 1-} \frac{{}_2F_1(a,b;c;z)}{-\log(1-z)} = \frac{\Gamma(a+b)}{\Gamma(a) \Gamma(b)}.
		\]
		\item[(c)] If $\Re(c-a-b) = 0$ and $c \neq a+b$, then 
		\[
		\lim_{z \to 1-} (1-z)^{a+b-c} \left( {}_2F_1(a,b;c;z) - \frac{\Gamma(c) \Gamma(c-a-b)}{\Gamma(c-a) \Gamma(c-b)} \right) = \frac{\Gamma(c) \Gamma(a+b-c)}{\Gamma(a) \Gamma(b)}.
		\]
		\item[(d)] If $\Re(c-a-b) < 0$, then
		\[
		\lim_{z \to 1-} \frac{{}_2F_1(a,b;c;z)}{(1-z)^{c-a-b}} = \frac{\Gamma(c) \Gamma(a+b-c)}{\Gamma(a) \Gamma(b)}.
		\]
	\end{itemize}
\end{proposition}

We recall the definition of the Beta function as well:
\[
\mathcal{B}(z_1,z_2) = \frac{\Gamma(z_1) \Gamma(z_2)}{\Gamma(z_1+z_2)}.
\]

\begin{theorem}   \label{thm_p}
	Let $n', n'' \geq 2$. $L'L$ is a Fourier multiplier with $p(\xi)$ given by \eqref{p1}, which also can be expressed as the integral \eqref{eq:p}. 

	(a) We have $p\in L^1_{\rm loc}(\rb^n)$. It is smooth away from the light cone $\mathcal L$ and away from the origin, and it is positively homogeneous of order $-1$. 
	
	(b) If $n'\ge3$ or $n''\ge3$, $p$ is continuous across the light cone $\mathcal L$ (away from the origin), and 
	\begin{equation}   \label{estC}
\frac{1}{C|\xi|}\le	p(\xi)\le \frac{C}{|\xi|}
	\end{equation}
with some $C=C(n',n'')>0$.

(c) When $n'=n''=2$, \eqref{estC} holds outside any conic neighborhood of the light cone, i.e., for $| |\xi'|-|\xi''||\ge c_0|\xi|$ with any $c_0>0$. Inside that set, for $0<c_0\ll1$, we have
\begin{equation}   \label{est_C}
\frac{1}{C|\xi|}\le	\frac{p_{2,2}(\xi)}{-\log \frac{| |\xi'|-|\xi''||}{|\xi|} }\le \frac{C}{|\xi|}.
\end{equation}
\end{theorem}

\begin{proof} 
Consider (a) first. 
The smoothness away from $\mathcal{L}\cup \{0\}$ was proved in the analysis of \eqref{p1} above.  On the other hand, since the convolution kernel of $L'L$ is positively homogeneous of order $-n+1$, $p$ must be positively homogeneous of order $-1$. The regularity claim follows from (b) and (c) below (which do not use (a)). Indeed, they imply that $p(\xi)$ is integrable on the unit sphere. Since such a homogeneous distribution $p(\xi)=|\xi|^{-1} p(\xi/|\xi|)$, of order $-1$,  has a unique extension to the whole $\rb^n$, that extension coincides with its restriction to $\xi\not=0$ considered as an almost everywhere defined function. It is in $L^1_{\rm loc}(\rb^n)$ since $n\ge4$. 

Consider (b) now. 
Since $p\in L^1_{\rm loc}$ is smooth away from the light cone, it is enough to consider the behavior of $p$ as $\xi$ tends to the latter. The integral in \eqref{eq:p} is absolutely convergent and defines a locally bounded (and, as we prove below, a continuous) function of $\xi= (\xi',\xi'')\not=0$ near the light cone (and hence everywhere away from the origin). Also, $p(\xi)>0$ for $\xi'\not=0$ and $\xi''\not=0$ but representation \eqref{eq:p} is not convenient to establish positivity when one of those factors vanishes. In that case, say when  $\xi'=0$, but $\xi'' \neq 0$, we use \eqref{p1} to write 
\begin{align*}
	p(0,\xi'') &= 2 \pi |\sb^{n'-1}| \int_{\sb^{n''-1}} \delta(\theta'' \cdot \xi'') \d S(\theta'') \\
	&= 2 \pi |\sb^{n'-1}| |\sb^{n''-2}| \int_{\rb} \delta(|\xi''| t) (1-t^2)_+^{(n''-3)/2} \d t \\
	&= 2\pi |\sb^{n'-1}| |\sb^{n''-2}| \frac{1}{|\xi''|},
\end{align*}
where we invoke Funk-Hecke theorem to get the second equality.
This proves estimate \eqref{estC} in case (b).

To prove the continuity of $p$ away from the origin in case (b), we can apply the Lebesgue dominated convergence theorem to \eqref{pp}. We will do something different however, which is of its own interest: we  derive an ``explicit'' formula for $p$ using hypogeometric functions. That allows us to treat case (c), as well.

For now, we consider the general case $n'\ge2$ and $n''\ge2$. 
Recall that $\kappa = |\xi'|/|\xi''|$ 
 when $\xi''\not=0$, see \eqref{kappa}. 
 Recall \eqref{pp}:
\begin{align}
	p(\xi) = 2C_{n',n''} |\xi''|^{-1} \int_0^\infty (1-s^2)_+^{(n'-3)/2} (1-\kappa^2 s^2)_+^{(n''-3)/2} \d s.
\end{align}
The substitution $t = s^2$ further yields
\begin{equation} \label{ppp}
	p(\xi) = C_{n',n''}  |\xi''|^{-1} \int_0^\infty t^{-1/2} (1-t)_+^{(n'-3)/2} (1-\kappa^2 t)_+^{(n''-3)/2} \d t.
\end{equation}	
When $\kappa\in (0,1)$, we get
\[
p(\xi)	=  C_{n',n''}  |\xi''|^{-1} \mathcal{B} \Big( \frac{1}{2}, \frac{n'-1}{2} \Big)\, {}_2F_1\Big(\frac{3-n''}{2}, \frac{1}{2}; \frac{n'}{2}; \kappa^2\Big),\quad \kappa\in (0,1),
\]
where in the last line, we used the Euler's Integral Representation of the Hypergeometric function (see \cite[Theorem~2.2.1]{Andrews-SF}).

Since for the coefficients $a,b,c$, see Proposition~\ref{pr_H}, we have $c-a-b= (n'+n''-4)/2\ge  1/2$, we can take the limit $\kappa\to 1-$ to get
\begin{align*}
	\lim_{\kappa \to 1-} p(\xi) &= C_{n',n''}  |\xi''|^{-1} \mathcal{B}\Big(\frac{1}{2}, \frac{n'-1}{2}\Big) \frac{\Gamma(n'/2) \Gamma((n'+n''-4)/2)}{\Gamma((n'+n''-3)/2) \Gamma((n'-1)/2)}\\
	&= C_{n',n''}  |\xi''|^{-1}   \frac{ \Gamma(1/2)  \Gamma((n-4)/2)}{\Gamma((n-3)/2) }\\
	& =C_{n',n''}   |\xi''|^{-1}   \mathcal{B}\Big(\frac{1}{2}, \frac{n -4}{2}\Big) . 
\end{align*}
This expression is symmetric with respect to $|\xi'|$ and $|\xi''|$ (note that we can replace $|\xi''|$ and $|\xi'|$ when $\kappa=1$), therefore, the limit $\kappa\to 1+$ would yield the same result. Therefore, $p$ is continuous across the light cone away from the origin. 

We proceed with the proof of (c) now. We have $n'=n''=2$. 
For $\kappa \in (0,1)$, we have 
\begin{align*}
	p_{2,2}(\xi) &= 8 \pi^2 |\xi''|^{-1} \,{}_2F_1\Big(\frac{1}{2}, \frac{1}{2}; 1; \kappa^2\Big),
\end{align*}
which is also $16\pi |\xi''|^{-1} $ times the complete elliptic integral of the first kind. 
Therefore, by Proposition~\ref{pr_H}(b), 
\begin{align*}
	\lim_{\kappa \to 1-} \frac{p_{2,2}(\xi)}{-\log(1-\kappa^2)} = 8 \pi |\xi''|^{-1}.
\end{align*}
For $\kappa \in (1,\infty)$, the expression for $p$ is
\[
p_{2,2}(\xi) = 8 \pi^2 |\xi'|^{-1} {}_2F_1\Big(\frac{1}{2}, \frac{1}{2}; 1; \kappa^{-2}\Big).
\]
As $\kappa \to 1+$, $\kappa^{-2} \to 1-$ and we get by Proposition~\ref{pr_H}(b) again,
\[
\lim_{\kappa \to 1+} \frac{p_{2,2}(\xi)}{-\log(1-\kappa^{-2})} = 8 \pi |\xi'|^{-1}.
\]
Note that $\log(1-\kappa^{-2})= \log\kappa^{-2} + \log|1-\kappa^2|= \log|1-\kappa^2|+o(1) = \log{|1-\kappa|} + o(1)$, as $\kappa\to1$. Also, $\log|1-\kappa| = \log\frac{||\xi'|-|\xi''| |}{|\xi''|} = \log\frac{||\xi'|-|\xi''| |}{|\xi|}+ O(1)$ for $\kappa$ close to $1$. Therefore,
\begin{equation}   \label{limp}
\lim_{\kappa \to 1} \frac{|\xi| p_{2,2}(\xi)}{-\log\frac{||\xi'|-|\xi''| |}{|\xi|}} = 8 \pi .
\end{equation}
This implies  estimate \eqref{est_C}.
\end{proof}

\begin{remark}
	In case (b), $p(\xi)$ is continuous across the light cone but it is not smooth there, by \cite[Theorem~7.1.18]{Hormander1}. We study this in more detail below. 
\end{remark}

\subsection{Inversion and stability} 

\begin{corollary}[Inversion]
	Let $n',n'' \geq 2$. For every $f\in L^2(\Omega)$, we have
	\[
	f = p(D)^{-1}L'Lf,
	\]
    where the operator $p(D)$ on the right is defined via the Fourier transform, as follows
    \[
p(D)^{-1} L'L f \coloneqq \mathcal{F}^{-1} p^{-1} \mathcal{F} L'L f .
    \]
\end{corollary}

The proof is straightforward in case (b) of Theorem~\ref{thm_p}. Indeed, then $\widehat{L'Lf}= p \hat f$ with $\hat f$ smooth and $p\in L^1_\textrm{\rm loc}$ satisfying \eqref{estC}. Then we just divide by $p$.

In case (c), when $n'=n''=2$, the symbol $p_{2,2}$ is singular at $\mathcal{L}$, and $p_{2,2}^{-1}$ extends continuously as zero there. For $f\in L^2(\Omega)$, $\hat f$ is smooth, and then $p_{2,2}\hat f$ would have the same singularity, at worst, as $p_{2,2}$ does, at $\mathcal L$. Then $ p_{2,2}^{-1}(\xi) p_{2,2}(\xi)\hat f(\xi)$ equals $\hat f(\xi)$ away from $\mathcal{L}$. Therefore, they equal almost everywhere, which is enough to determine $\hat f$ as an element of $L^2$. We are essentially using the fact that $\supp \hat f$ cannot be in $\mathcal L$. 

To formulate a stability estimate, in the case $n'=n''=2$ we need a modified Sobolev space. To this end, define the norm
\[
 \|h\|_{H_{2,2}^1(\rb^4)}^2:= \int_{\rb^4}\sigma^2(\xi)|\hat h(\xi)|^2\d\xi, \quad \sigma(\xi):=   \frac{\langle \xi\rangle }{-\log\frac{||\xi'|-|\xi''| |}{e|\xi|}},
\]
where $\langle \xi \rangle = (1+|\xi|^2)^{1/2}$. Since $||\xi'|-|\xi''| |\le |\xi|$, the denominator in $\sigma$ has a lower bound $1$, therefore $0\le\sigma(\xi)\le \langle \xi\rangle$ but $\sigma$ is not elliptic. We define the space $H_{2,2}^1(\rb^4)$ as the completion of $C_0^\infty(\rb^4)$ under that norm. It is a Hilbert space. 

\begin{theorem} 
	\ 
	
	(a) When $n'=n''=2$, we have 
	\begin{equation}   \label{st1}
	\|f\|_{L^2(\Omega)}/C \le \|L'Lf\|_{H^1_{2,2}(\rb^4)} \le C \|f\|_{L^2(\Omega)},\quad \forall f\in L^2(\Omega). 
	\end{equation}
	
	(b)  When either $n'\ge3$ or $n''\ge3$, we have
	\begin{equation}   \label{st2}
		\|f\|_{L^2(\Omega)}/C \le \|L'Lf\|_{H^1(\rb^n)} \le C \|f\|_{L^2(\Omega)},\quad \forall f\in L^2(\Omega).
	\end{equation}
\end{theorem}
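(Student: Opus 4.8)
The plan is to deduce both estimates from the fact, established earlier, that $L'L$ is the Fourier multiplier $p(D)$, together with the pointwise bounds on $p(\xi)$ from the preceding theorem. The structure is the same in both cases: write $\|L'Lf\|^2$ in Fourier as $\int m(\xi)^2|\hat f(\xi)|^2\,\d\xi$ for the appropriate weight $m$ (namely $m(\xi)=\langle\xi\rangle p(\xi)$ in case (b), and $m(\xi)=\sigma(\xi)p(\xi)$ in case (c)), and then show that on the relevant frequency range $m(\xi)$ is bounded above and below by positive constants. The upper bound in each case gives the right-hand inequality directly (after using that extension by zero embeds $L^2(\Omega)$ isometrically into $L^2(\rb^n)$ and Plancherel); the lower bound gives the left-hand inequality since $\|f\|_{L^2(\Omega)}=\|f\|_{L^2(\rb^n)}=(2\pi)^{-n/2}\|\hat f\|_{L^2}$.

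For case (b), by part (b) of the previous theorem we have $1/(C|\xi|)\le p(\xi)\le C/|\xi|$ for all $\xi\ne0$, so $\langle\xi\rangle p(\xi)$ is bounded above and below by positive constants uniformly in $\xi$ (for $|\xi|\ge1$ this is clear; for $|\xi|\le1$ one uses $\langle\xi\rangle\sim1$ and $p(\xi)\sim|\xi|^{-1}$, which is $L^2_{\rm loc}$-harmless but more to the point $\langle\xi\rangle p(\xi)\ge c$ there as well and is not bounded above near $0$ — so one should instead observe that $\langle\xi\rangle\le\sqrt2\,|\xi|^{-1}\langle\xi\rangle$ is false; the clean statement is $c\langle\xi\rangle/|\xi|\le \langle\xi\rangle p(\xi)$, and since for the \emph{lower} bound we want $\|L'Lf\|_{H^1}\gtrsim\|f\|_{L^2}$ we only need $\langle\xi\rangle p(\xi)\ge c$, which holds because $\langle\xi\rangle\ge1$ and $p(\xi)\ge c|\xi|^{-1}\ge c\langle\xi\rangle^{-1}$, whence $\langle\xi\rangle p(\xi)\ge c$; for the upper bound we need $\langle\xi\rangle p(\xi)\le C$ for $f\in L^2(\Omega)$, and here one uses that $\hat f$ is actually an entire function of exponential type, in particular bounded near $\xi=0$, so the mild non-uniform growth of $\langle\xi\rangle p(\xi)\sim|\xi|^{-1}$ as $\xi\to0$ is integrated against a bounded $|\hat f|^2$ over a bounded set and contributes a finite amount $\le C\|f\|_{L^2(\Omega)}^2$ by the standard argument in \cite{SU-book}). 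This is exactly the argument referenced as ``the proof is the same as in \cite{SU-book}'' in Example~\ref{Ex34}.

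For case (c), $n'=n''=2$, the weight is $\sigma(\xi)p(\xi)$ with $\sigma(\xi)=\langle\xi\rangle\big/\big(-\log\frac{||\xi'|-|\xi''||}{e|\xi|}\big)$. Split the frequency space into the region $\{\,||\xi'|-|\xi''||\ge c_0|\xi|\,\}$, where by part (c) of the previous theorem $p(\xi)\sim|\xi|^{-1}$ and also the logarithm in $\sigma$ is bounded between two positive constants, so $\sigma(\xi)\sim\langle\xi\rangle$ and we are back in the situation of case (b); and the complementary conic neighborhood of $\mathcal L$, where \eqref{est_C} gives $p_{2,2}(\xi)\sim|\xi|^{-1}\big(-\log\frac{||\xi'|-|\xi''||}{|\xi|}\big)$, which combined with the definition of $\sigma$ yields $\sigma(\xi)p_{2,2}(\xi)\sim \langle\xi\rangle|\xi|^{-1}\cdot\frac{-\log\frac{||\xi'|-|\xi''||}{|\xi|}}{-\log\frac{||\xi'|-|\xi''||}{e|\xi|}}$, and the last ratio tends to $1$ (the $e$ only shifts the log by $1$), so again $\sigma(\xi)p_{2,2}(\xi)\sim\langle\xi\rangle|\xi|^{-1}$, bounded above and below away from the origin. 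Near $\xi=0$ one argues as in case (b) using that $\hat f$ is bounded there. Assembling the two regions gives \eqref{st1}.

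The main obstacle is not any single deep step but rather the bookkeeping near $\xi=0$: $p$ has an $|\xi|^{-1}$ singularity at the origin, so neither $\langle\xi\rangle p(\xi)$ nor $\sigma(\xi)p(\xi)$ is bounded on all of $\rb^n$, and one genuinely needs the hypothesis $f\in L^2(\Omega)$ (with $\Omega$ bounded) — equivalently the Paley–Wiener smoothness/boundedness of $\hat f$ near $0$ — to absorb that region; this is the one place where $\Omega$ being bounded is used, and it is precisely the point handled in \cite{SU-book}. Away from the origin everything reduces to the elementary two-sided bounds on $p$ already proved, plus, in case (c), the observation that replacing $|\xi|$ by $e|\xi|$ inside the logarithm is a harmless $O(1)$ perturbation that makes the denominator of $\sigma$ bounded below by $1$ so that $H^1_{2,2}$ is a well-defined Hilbert space.
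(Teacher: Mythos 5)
Your proof is correct and follows essentially the same route as the paper: Plancherel plus the two-sided pointwise bounds on $p(\xi)$ established in the preceding theorem, with the $|\xi|^{-1}$ singularity of $p$ at the origin absorbed by the compact support of $f$ (the Paley--Wiener/Cauchy--Schwarz bound on $\hat f$ near $0$, combined with local integrability of $|\xi|^{-2}$ since $n\ge4$ — this is exactly the Natterer-style step the paper cites). The only cosmetic difference is that the paper packages the bounds in case (c) by introducing the homogeneous-of-degree-zero auxiliary function $q(\xi)$ from \eqref{limp} and writing $p\sigma = q\langle\xi\rangle/|\xi|$, rather than explicitly splitting into a conic neighborhood of $\mathcal L$ and its complement as you do; the two bookkeepings are equivalent.
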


\begin{proof}
We start with \eqref{st1}. 	We take Fourier transforms of $f$ and $L'Lf$. Then the first inequality follows from $p(\xi)\sigma(\xi)\ge 1/C$, which is true in a conic neighborhood of the light cone $\mathcal L$ by \eqref{limp}, and outside it in a trivial way. For the second inequality in \eqref{st1},  we can use   $p(\xi)\sigma(\xi)\le C$ for $|\xi|\ge1$. In the ball $|\xi|\le 1$ however, $p$ has a singularity at the origin, while $\sigma$ does not compensate for it; it compensates for the logarithmic factor only. To deal with the singularity at $\xi=0$, we proceed as in the proof of \cite[Theorem~II.5.1]{Natterer-book}. Denote by $q(\xi)$ the function in \eqref{limp}, before we take the limit there. It is positively homogeneous of order $0$, and has positive lower and upper bounds. We have $p\sigma = q \langle\xi\rangle/|\xi|$, and then $1/C \le |\xi|p\sigma\le C$. Then cutting $\widehat{L'Lf}$ to $|\xi|\le1$, we are left with estimating
\[
\int_{|\xi|\le 1} |\xi|^{-2} |\hat f(\xi)|^2\d \xi
\]
which can be bounded by $C_s\|f\|^2_{H^s}$ for every $s$, see p.~44 of the proof of  \cite[Theorem~II.5.1]{Natterer-book}   as a consequence of the fact that $|\xi|^{-2}$ is locally integrable, and that $f$ is compactly supported. 

The proof of (b) is similar, except that we have $\sigma(\xi) = \langle \xi\rangle$ then. 
\end{proof}

\subsection{The symbol $p(\xi)$ is a conormal distribution for $\xi\not=0$} \label{sec_co}
We established already that $p(\xi)$ is singular only at the light cone $\mathcal L$. 
We will show that away from the origin, the singularities are conormal at the light cone. We refer to  \cite[sec.~18.2]{Hormander3} for the conormal distributions calculus.   Our starting representation is \eqref{ppp}. One can see even directly from it that the only singularity with respect to $\kappa\in (0,\infty)$   happens at $\kappa=1$. Then only the integral in some neighborhood of $t=1$ matters for the singularity. With this in mind, we set $s=1-t$, $z = \kappa^{2}-1$ in \eqref{ppp}. In the new variables, we study the singularity at $z=0$; and it will be affected by the behavior of the integrand near $s=0$ only.    Then $1-\kappa^{2} t = 1- (1+z)(1-s) = s-z +sz $, and 
\begin{equation}\label{q}
p(\xi) = C_{n',n''} |\xi''|^{-1}q,\quad q(z) := \int_{\rb} (1-s)_+^{-1/2} s_+^{(n'-3)/2} (s-z +sz)_+^{(n''-3)/2} \d s .
\end{equation}
We have $z=|\xi'|^2/|\xi''|^2-1$; in other words, eventually, we will view $q$ as $q=q(z(\xi))$. 

Recall that 
\[
\big(x_+^{\lambda-1}\big)\hat{\big.} = \Gamma(\lambda)  e^{-\I \pi \lambda/2} (\xi-\I 0)^{-\lambda}  , \quad \lambda \in\mathbb{C}\setminus\{0,-1,\dots\},
\]
where the distributions $x_\pm^{\alpha}$ are defined as a meromorphic continuation from $\Re\alpha>-1$.  
In our case, we will apply this with $\lambda-1\in \{ -1/2,0,1/2,1,\dots \}$. The resolvent $(\xi-\I 0)^{-\lambda}$ is defined as the limit, as $\epsilon\to0+$, of $(\xi-\I\epsilon)^{-\lambda} = e^{\I \lambda \pi/2}(\I \xi+\epsilon)^{-\lambda}$, with $z^{-\lambda}$ defined in $|\arg z|<\pi/2$ with the branch satisfying $1^{-\lambda}=1$, see \cite[Example~7.1.17]{Hormander1}. 

Then we have, see \cite[p.~110]{Friedlander1998}, \cite[p.~72]{Hormander1},
\[
\big(x_+^{\lambda-1}\big)\hat{\big.} = \Gamma(\lambda) \left( e^{-\I \pi \lambda/2} \xi_+^{-\lambda} + e^{\I \pi \lambda/2} \xi_-^{-\lambda} \right), \quad \lambda \not\in\mathbb{Z},
\]
where $\xi_-=(-\xi)_+$, and
\begin{equation}\label{x1}
\big(x_+^{\lambda-1}\big)\hat{\big.} = (-\I)^\lambda (\lambda-1)! \left(  \xi_+^{-\lambda} +  (-1)^\lambda \xi_-^{-\lambda} \right) = (-\I)^\lambda (\lambda-1)! \,  \xi^{-\lambda}, \quad \xi\not=0, \; \lambda =1,2,\dots.
\end{equation}

The second relation can be obtained by starting with the Fourier transform of the Heaviside function and applying $(\I \partial_\xi)^{\lambda-1}$. For half-integers, we obtain
\begin{equation}\label{x2}
\big(x_+^{m-1/2}\big)\hat{\big.} = \Gamma(m+1/2) (-\I)^m e^{-\I\pi/4} \left(   \xi_+^{-m-1/2} + \I (-1)^m   \xi_-^{-m-1/2} \right), \quad m \in\mathbb{Z}.
\end{equation}

The symbol of $x_+^{\lambda-1}$, as a distribution conormal to $x=0$ is given by the same formulas, away from $\xi=0$. Note that this is not the same convention about the constant multiplier, a power of $2\pi$, as in \cite{Hormander3}. It would be, if we regard $x_+^{\lambda-1}$ as a conormal distributions in two dimensions; in our case with coordinates $z$ (or $\kappa$) and $|\xi''|$ as a radial variable. 

Denote the symbols of $x_+^{(k-3)/2}$ by $a_{k}(\xi)$ given either by \eqref{x1} or \eqref{x2}, depending on the parity of $k$. We assume that the origin $\xi=0$ is cut off smoothly.  
Then 
\[
(s-z +sz)_+^{(n''-3)/2}  \sim (2\pi)^{-1} \int e^{\I (s-z +sz)\zeta} a_{n''}(\zeta)\d \zeta,
\]
where $\sim$ means that the error is smooth both in $s$ and $z$. 
Next, $(1-s)_+^{-1/2} s_+^{(n'-3)/2} $ is also a  conormal distribution at $s=0$ with principal symbol $a_{n'}(\sigma)$  but a complete symbol $\tilde a_{n'}(\sigma) $ having some lower order terms compared to  $a_{n'}(\sigma)$. It has an additional singularity at $s=1$ but that does not contribute to a singularity of $	q(z)$ as explained in the first paragraph of this subsection. 
 
By \eqref{q} we get 
\begin{align}
 q(z)& \sim  (2\pi)^{-1} \iint (1-s)_+^{-1/2} s_+^{(n'-3)/2}  e^{\I (s-z +sz)\zeta} a_{n''}(\zeta)\d \zeta   \d s \\
 &=  (2\pi)^{-1}  \int e^{-\I z\zeta}a_{n''}(\zeta) \int (1-s)_+^{-1/2} s_+^{(n'-3)/2}  e^{\I s(1 +z)\zeta} \d s \d \zeta   \\
  &\sim  (2\pi)^{-1}  \int e^{\I z\zeta} a_{n''}(-\zeta) \tilde a_{n'}((1+z)\zeta) \d \zeta .  \label{qz}
\end{align}
Those calculations can be justified by considering the integrals above as oscillatory ones. 
The function $\tilde a_{n'}((1+z)\zeta)$ is a symbol as well (with $z$ close to $z=0$ a spatial variable), with principal symbol  $ a_{n'}(\zeta)$. Then we get the principal symbol (in those coordinates) of $q$ to be
\begin{equation}\label{b}
b(\zeta):=   a_{n'}(\zeta) a_{n''}(-\zeta).
\end{equation}
Its order is $-(n'-1)/2 -(n''-1)/2 = -n/2+1$. 

We proved the following. 
\begin{theorem} \label{thm_co}
The symbol $p(\xi)$, in the coordinates (parameters) $r'=|\xi'|$, $r''=|\xi''|$, $\theta'$, $\theta''$,  depends on $(r',r'')$ only, in which $\mathcal L$ is given by $r'=r''$. It has a conormal singularity at $\mathcal L$. 
 In the coordinates $z= (r'/r'')^2-1$, $r''$, 
its  principal symbol is 
\[
\sigma_0(p)= C_{n',n''}(r'')^{-1}  a_{n'}(\zeta) a_{n''}(-\zeta),
\]
where $a_{n'}$ is given either by \eqref{x1} with $\lambda -1 = (n'-3)/2$ when $n'$ is odd, or by \eqref{x2} with $m-1/2=(n'-3)/2$ when $n'$ is even.   The amplitude $a_{n''}$ is defined similarly. 
\end{theorem}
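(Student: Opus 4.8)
The plan is to take the representation \eqref{ppp} as the starting point and localize near the light cone $\mathcal L$, i.e.\ near $\kappa = 1$, using the change of variables $s = 1-t$, $z = \kappa^2 - 1$ already recorded in \eqref{q}, so that $p(\xi) = C_{n',n''}(r'')^{-1}q(z)$ with $z = z(\xi) = (r'/r'')^2 - 1$ a defining function of $\mathcal L$. Since $p$ depends on $\xi$ only through $r' = |\xi'|$ and $r'' = |\xi''|$ (established in the earlier sections via the Funk--Hecke reduction), and since $p$ is smooth away from $\mathcal L$, it suffices to analyze $q(z)$ as a distribution in $z$ near $z = 0$, with $r''$ playing the role of a radial parameter that is harmless (it contributes only the overall homogeneity factor $(r'')^{-1}$, which is why the stated order drops by one from the order of $b$). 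The goal is to identify $q$ as a one-dimensional conormal distribution at $z = 0$ and read off its principal symbol.

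First I would recall that the two elementary factors in the integrand of $q$ are themselves conormal: $s_+^{(n'-3)/2}$ (times the harmless smooth-away-from-$0$ factor $(1-s)_+^{-1/2}$, whose singularity at $s = 1$ never interacts with $z = 0$) is conormal at $s = 0$, and $(s - z + sz)_+^{(n''-3)/2} = ((1+z)s - z)_+^{(n''-3)/2}$ is conormal at the hypersurface $(1+z)s = z$, which passes through $(s,z) = (0,0)$. The cases $n'$ (resp.\ $n''$) odd and even are handled by \eqref{x1} and \eqref{x2} respectively, giving the symbols $a_{n'}$, $a_{n''}$ (with full symbols $\tilde a_{n'}$, $\tilde a_{n''}$ carrying lower-order corrections; only the principal parts $a_{n'}$, $a_{n''}$ survive in the answer). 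Writing $(s - z + sz)_+^{(n''-3)/2}$ as an oscillatory integral $(2\pi)^{-1}\int e^{\I(s - z + sz)\zeta} a_{n''}(\zeta)\,\d\zeta$ and substituting into \eqref{q}, I would interchange the $s$- and $\zeta$-integrals, recognize the inner $s$-integral as (essentially) the Fourier transform of the conormal distribution $(1-s)_+^{-1/2}s_+^{(n'-3)/2}$ evaluated at the frequency $(1+z)\zeta$, obtaining $\tilde a_{n'}((1+z)\zeta)$ up to the $e^{-\I z\zeta}$ phase that is pulled out — exactly the chain \eqref{qz}. The resulting one-phase oscillatory integral $q(z) \sim (2\pi)^{-1}\int e^{\I z\zeta} a_{n''}(-\zeta)\tilde a_{n'}((1+z)\zeta)\,\d\zeta$ exhibits $q$ as a conormal distribution at $z = 0$ with amplitude $a_{n''}(-\zeta)\tilde a_{n'}((1+z)\zeta)$; since $\tilde a_{n'}((1+z)\zeta)$ has principal symbol $a_{n'}(\zeta)$ (the factor $(1+z)$ being $1$ to leading order at $z = 0$), the principal symbol of $q$ is $b(\zeta) = a_{n'}(\zeta)a_{n''}(-\zeta)$, of order $-(n'-1)/2 - (n''-1)/2 = -n/2 + 1$, and multiplying back by $C_{n',n''}(r'')^{-1}$ gives the claimed formula for $\sigma(p)$.

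The remaining point is to justify the manipulations: the interchange of integrals and the passage to an oscillatory-integral identity must be read in the sense of conormal distributions / the calculus of \cite[sec.~18.2]{Hormander3}, since none of the intermediate integrals is absolutely convergent. I would do this by regularizing (e.g.\ the $e^{-\epsilon t^2/2}$-type cutoff already used elsewhere in the paper, or by analytically continuing the exponents $(n'-3)/2$, $(n''-3)/2$ from the region where everything converges absolutely, namely $n', n'' > 3$, and invoking the meromorphic dependence of both sides), testing against $\phi \in C_0^\infty(\rb \setminus 0)$ in the $z$-variable; the identities \eqref{x1}--\eqref{x2} for the Fourier transforms of $x_+^{\lambda-1}$ and the product structure of conormal distributions then yield the result. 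The main obstacle is precisely this bookkeeping: making the formal oscillatory-integral calculation \eqref{qz} rigorous, keeping track of the normalization convention for conormal symbols (the paper flags that it differs from \cite{Hormander3} by a power of $2\pi$, viewing $x_+^{\lambda-1}$ as conormal in the two variables $z$ and the radial $r''$), and verifying that the extra singularities at $s = 1$ and the lower-order terms in $\tilde a_{n'}$ genuinely do not affect the principal symbol at $z = 0$. Everything else — the reduction to $q(z)$, the identification of the conormal type, the order count — is routine once \eqref{ppp} is in hand.
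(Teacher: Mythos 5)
Your proposal follows essentially the same route as the paper: starting from \eqref{ppp}, changing variables to $s=1-t$, $z=\kappa^2-1$ to obtain the representation \eqref{q}, writing the factor $(s-z+sz)_+^{(n''-3)/2}$ as an oscillatory integral with amplitude $a_{n''}$, interchanging integrals to recognize the $s$-integral as the symbol $\tilde a_{n'}$ evaluated at $(1+z)\zeta$, and then reading off the principal symbol $b(\zeta)=a_{n'}(\zeta)a_{n''}(-\zeta)$ from the resulting conormal representation \eqref{qz}. The only difference is that you spell out more explicitly how one might rigorously justify the oscillatory-integral manipulations (regularization or analytic continuation in the exponents), where the paper simply appeals to the standard interpretation of such integrals as oscillatory ones.
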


The first claim follows from \eqref{q}.   We see that $\sigma_0(p)$ is elliptic since it does not vanish.   

The invariant way of looking at a principal symbol of a conormal distribution is to view $p(\xi)$ as a half-density first. Even though it depends on $\xi$ only, we may want to think of it as a special case of a general symbol depending on $(x,\xi)$; then $p$ would be associated to $p(\xi)|\!\d x\d\xi|^{1/2}$. Then the principal symbol of it at $\mathcal{L}$, the latter considered as a submanifold of $T^*\rb^n\setminus 0$, would be a half-density on $N^*\mathcal{L}$. This would change the order by the usual convention, see \cite[XVIII.18.2]{Hormander3}. We will not pursue this. The formula for $\sigma_0(p)$ we got, multiplied by $|\!\d r''\d \zeta|^{1/2}$ gives the principal symbol of $p|\!\d z\d r''|^{1/2}$, and then should be considered to have  order $(3-n)/2$, with the extra $1/2$ coming from the half-density. 

Theorem~\ref{thm_co} suggests that $L'L$ is an FIO in the $I^{p,\ell}$ class, see \cite{Greenleaf_Uhlmann90, Greenleaf_UhlmannCM, Greenleaf-Uhlmann} mentioned earlier for application to Integral Geometry.  

\subsection{Example~\ref{example}, revisited} 
We revisit some of the particular cases for $(n',n'')$ in Example~\ref{example} to show how we can connect the explicit expressions there with the computation for the principal symbol. Recall that $b$ is given by \eqref{b}. 

\subsubsection{$(n',n'')=(3,3)$} In this case, $C_{3,3}=(2\pi)^3$, and 
\begin{align*}
	p_{3,3} &= 2C_{3,3} |\xi''|^{-1} \begin{cases}
		1, &\text{if $\kappa \leq 1$}, \\
		\frac{1}{\kappa}, & \text{if $ \kappa > 1$}
	\end{cases} 
\quad 	= \quad 2C_{3,3} |\xi''|^{-1} \begin{cases}
		1, &\text{if $z \leq 0$}, \\
		(1+z)^{-1/2}, & \text{if $z > 0$}.
	\end{cases}
\end{align*}
Thus,
\[
p_{3,3} = C_{3,3}|\xi''|^{-1}\Big(2 -z_++\dots\Big).
\]
Since $\widehat{z_+} = - \zeta^{-2}$ for $\zeta\not=0$ by \eqref{x1}, and the first term is smooth, we get $b(\zeta) = \zeta^{-2}$. 

We use Theorem~\ref{thm_co} now to compute $b$ now. Since $a_{n'}=a_{n''} = -\I \zeta^{-1}$ when $n'=n''=3$, 
\[
b(\zeta) = a_{3}(\zeta) a_{3}(-\zeta) = (-\I) \zeta^{-1} \cdot (-\I) (-\zeta)^{-1} = \zeta^{-2}, 
\]
which matches the previous computation.

\subsubsection{$(n',n'')=(5,3)$}
Recall that 
\begin{align*}
	p_{5,3}(\xi) &=  2 C_{5,3} |\xi''|^{-1} \begin{cases}
		\frac{2}{3}, & \text{if $\kappa \leq 1$} \\
		\frac{1}{\kappa} - \frac{1}{3\kappa^3}, & \text{if $\kappa > 1$}.
	\end{cases}
\end{align*}
Let us rewrite this in terms of $z = \kappa^2 - 1$:
\begin{align*}
	p_{5,3}(\xi) &=  2 C_{5,3} |\xi''|^{-1} \begin{cases}
		\frac{2}{3}, & \text{if $z \leq 0$}, \\
		(1+z)^{-1/2} - \frac{1}{3} (1+z)^{-3/2}, & \text{if $z > 0$}.
	\end{cases}
\end{align*}
Computing the first and the second  derivatives from  $z \to 0+$, or just computing the Taylor expansion, we get
\[
	p_{5,3} =  C_{5,3} |\xi''|^{-1}\Big( \frac43 -\frac12 z_+^2 +\dots \Big).
\]
The leading singularity determining $b$ is $-z_+^2/2$ having a symbol $b= (-1/2) 2\I \zeta^{-3} = - \I \zeta^{-3}$ by \eqref{x1}. 

On the other hand, with $\lambda' = (n'-3)/2 + 1  = 2$, and $\lambda'' = (n''-3)/2+1 = 1$ in \eqref{x1}, we get 
\[
a_{5}(\zeta) a_3(-\zeta) = (-\I)^2 \zeta^{-2} \cdot (-\I) (-\zeta)^{-1} = -\I \zeta^{-3} , 
\]
which coincides with what we got for $b$ above.

\subsection{$(n',n'')=(2,3)$} We have
\begin{align*}
	p_{2,3} &= 2 C_{2,3} |\xi''|^{-1} \begin{cases}
		\frac{\pi}{2}, & \text{if $\kappa \leq 1$},\\
		\arcsin{1/\kappa}, & \text{if $\kappa > 1$}.
	\end{cases}\\
	&=    2 C_{2,3} |\xi''|^{-1} \begin{cases}
		\frac{\pi}{2}, & \text{if $z \leq 0$},\\
		\arcsin{((1+z)^{-1/2})}, & \text{if $z > 0$}.
	\end{cases}
\end{align*} 
We see that $p_{2,3}$ is continuous across $z=0$.  
The first derivative of $f(z)= \arcsin{((1+z)^{-1/2})}$ is $f'=\frac{-1}{2 \sqrt{z} (1+z)}\sim -\frac12 z^{-1/2} +O(z^{1/2})$. This implies $f = \pi/2-z^{1/2}+O(z^{3/2})$. Then $b(\zeta)$ is the principal symbol of the leading singularity of $\pi-2 z_+^{1/2}$. We have 
\begin{align*}
	-2 (z_+^{1/2})^{\widehat{}} &= -2 \Gamma(3/2) (-\I) e^{-\I \pi/4} (\zeta_+^{-3/2} - \I \zeta_-^{-3/2}) \\
	&= \I \sqrt{\pi} \frac{1}{\sqrt{2}} (1-\I) (\zeta_+^{-3/2} - \I \zeta_{-}^{-3/2}), \quad \zeta\not=0.
\end{align*}

Now, we appeal to the theorem. We have $\lambda' = -1/2$, which means that $m' = 0$, and $\lambda'' = 1$. Therefore,
	\begin{align*}
	b(\zeta)= 	a_{2}(\zeta) a_{3} (-\zeta) &= \Gamma(1/2) e^{-\I \pi/4} \left(\zeta_+^{-1/2} + \I \zeta_-^{-1/2} \right) \cdot (-\I) (-\zeta)^{-1}\\
		&= \I \sqrt{\frac{\pi}{2}} (1-\I)  ( \zeta_+^{-3/2} -\I \zeta_-^{-3/2}).
	\end{align*} 
It is the same expression as above. 

\section{$L$ as a generalized Radon transform} \label{sec_FIO}
We will fit $L$ into the framework of Guillemin's generalized Radon transforms, and in particular, will describe it as a Fourier Integral Operator (FIO). We are not going to get deeper into the analysis and will only hint on why $p$ might be expected to have a non-symbol behavior (a conormal singularity at $\mathcal{L}$) from an FIO point of view. 

This framework starts with picking a connected manifold $Y$ where our functions are defined, and a connected manifold $X$ of submanifolds over which we integrate.   In our case, $Y=\rb^n$ with $n_Y=\dim Y=n$, and we identify the light lines with  $X=\Sigma$ with $n_X= \dim X=2n-3$, see \eqref{Sigma}. We choose measures on each one of them, which we already did in the previous sections. The point-line relation $Z\subset X\times Y = \Sigma\times \rb^n$  is given by 
\begin{equation}
Z:= \{(z,\theta,x)|\; (z,\theta)\in \Sigma, \, x=z+t\theta\; \text{for some $t\in\rb$}\}
\end{equation}
We parameterize $Z$ by $(z,\theta,t)\in \Sigma\times \rb$ making it a $n_Z:= 2n-2$ dimensional connected manifold, and we chose the product measure on it. 

Consider the diagram
\begin{equation} \label{diag1}
	\begin{tikzcd}[]
		&  Z \arrow{dl}[swap]{\pi_{\Sigma}}\arrow{dr}{\pi_{\rb^n}} & \\
		\Sigma  
		& &\rb^n
	\end{tikzcd} 
\end{equation}
where $\pi_\Sigma$ and $\pi_{\rb^n}$ are the natural projections. The projection $\pi_{\rb^n}$ is proper, and both are submersions. Then $Z$ is a double fibration. The transform $L$ is the generalized Radon transform related to it. Its Schwartz kernel is $\delta_Z$ with respect to the measure on $Z$. As a consequence, $L\in I^\frac{1-n}4(\Sigma\times\rb^n; \; N^*Z\setminus 0)$. Indeed, the order of $L$ is  $m= k/2- (n_X+n_Y)/4$, where $k =n_X+n_Y-n_Z $ is the codimension of the submanifolds over which we integrate on. We have $k=n-1$, and $m= (n-1)/2 - (2n-3+n)/4= (1-n)/4$.

The microlocal version of the diagram \eqref{diag1} is
\begin{equation} \label{diag2}
	\begin{tikzcd}[]
		&  N^*Z\setminus 0 \arrow{dl}[swap]{\pi_{T^*\Sigma}}\arrow{dr}{\pi_{T^*\rb^n}} & \\
		T^*\Sigma\setminus 0 
		& &T^*\rb^n\setminus 0.
	\end{tikzcd} 
\end{equation}
The dimensions in \eqref{diag2} from left to right are: $4n-6\ge 3n-3\ge  2n$. 

As we found out, $L'L$ is not a \PDO\ because $p(\xi)$ is singular at the light cone $\mathcal L$. This is an indication that the Bolker condition fails. We will demonstrate this directly. We recall that the Bolker condition requires $\pi_{T^*\Sigma}$ to be an injective immersion.

We analyze the Lagrangian $N^*Z$, and therefore, the canonical relation $N^*Z'\setminus 0$ in more detail. 
The operator $L$ has a delta type of Schwartz kernel with a wavefront set conormal to $Z$. Choose local coordinates $\alpha\in \rb^{2n-3}$ for $(z,\theta)$ near some $(z_0,\theta_0)$. Write $\ell_\alpha(t)=z+t\theta$ in these coordinates.  Then $Z$ is parameterized by $(\alpha,t)$, and
\begin{equation} \label{MA_light_2}
N^*Z = \Big\{  ((\alpha,x) , ({\hat \alpha},\xi )     )\big| \; x=\ell_\alpha(t) ,\,  \xi\cdot\dot\ell_\alpha(t)=0,\, \hat\alpha= \xi_j\partial_\alpha\ell^j_\alpha(t)  \Big\},
\end{equation}
where hats indicate dual variables, see also \cite{SU-book}. Thus, parameters on $N^*Z$ can be chosen to be $(\alpha,t,\xi)$ with $\alpha\in \rb^{2n-3}$,   $\xi\perp\theta$, $t\in\rb$. 
The canonical relation associated to $L$ is given by $C = N^*Z'\setminus 0$. 
We can check directly that  $\hat \alpha=0$ if and only if $\xi=0$.   
As a consequence, 
\be{MA_light_4}
\WF(L f) \subset C\circ \WF(f)
\ee
as it follows from the H\"ormander-Sato lemma. We have
\be{MA_light_6}
\pi_{T^*\Sigma} ((z,\theta,{\hat z},\hat\theta) , (x, \xi )) =     (z,\theta,{\hat z},\hat\theta)   = \left(z,\theta,\xi, t\big(\xi' -(\xi'\cdot\theta') \theta' , \xi'' -(\xi''\cdot\theta'') \theta''\big) \right) ,
\ee
written in the parameterization  $(z,\theta,t,\xi)$. Then injectivity of $\pi_{T^*\Sigma} $ is equivalent to unique solvability of $t\big(\xi' -(\xi'\cdot\theta') \theta' , \xi'' -(\xi''\cdot\theta'') \theta''\big)  =  \hat\theta$ with respect to $t$ given $(z,\theta,\xi)$. Uniqueness holds if and only if the vector multiplying $t$ on the left does not vanish.  When it does, uniqueness fails, which is a failure of the Bolker condition. In that case, $\xi'=\lambda\theta'$, $\xi''=\mu\theta''$, which, together with $\xi\cdot\theta=0$ implies $\xi\in\mathcal{L}$. On the other hand, $\xi\in\mathcal{L}$ implies non-uniqueness for some $\theta$.

We consider the projection
    \be{MA_light_7}
\pi_{T^*\rb^n} ((z,\theta,{\hat z},\hat\theta) , (x, \xi )) =     (x, \xi )  = (z+t\theta, \xi) 
    \ee
briefly as well. Recall that $(z,\theta,t,\xi)$ are parameters here with $z$ and $\xi$ perpendicular to $\theta\in \sb^{n'-1}\times \sb^{n''-1}$. This projection is surjective because given $\xi\in \rb^n\setminus 0$, we can always find $\theta$ as above normal to it, as we saw in the proof of Corollary~\ref{cor_uniq}. This is in contrast to the Minkowski case (either $n'=1$ or $n''=1$), where the range of $\pi_{T^*\rb^n}$ is the causal cone only since the equation $\xi\cdot(1,\theta)=0$ with $|\theta|=1$ has a solution when $|\xi'|\le |\xi''|$ only, say when $n'=1$. For this reason, we can resolve all singularities in the case $n'\ge2$, $n''\ge2$ under consideration, while we can do that only on the spacelike cone in the Minkowski case. 

We formulate the properties we established below.

\begin{theorem}\label{thm_FIO} \ 

(a) $L\in I^\frac{1-n}4(\Sigma\times\rb^n; \; N^*Z\setminus 0)$ with $\Sigma$ as in \eqref{Sigma} and canonical relation given by $C = N^*Z'\setminus 0$. 

(b) The Bolker condition is satisfied if and only if $\xi\not\in\mathcal L$.   
\end{theorem}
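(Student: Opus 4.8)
The plan is to prove the two parts of Theorem~\ref{thm_FIO} by separating the purely bookkeeping computation in (a) from the geometric analysis of the projection in (b), using the double fibration structure already set up in \eqref{diag1}--\eqref{MA_light_3}.

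For part (a), I would argue as follows. The Schwartz kernel of $L$ is the delta distribution $\delta_Z$ on $Z$ with respect to the chosen product measure, viewed as a distribution on $\Sigma\times\rb^n$. A delta on a submanifold is a conormal distribution associated to the conormal bundle $N^*Z\setminus 0$, and the order in the $I^m$ convention is determined purely by the dimensions: for integration over submanifolds of codimension $k$ in a product of manifolds of dimensions $n_X$ and $n_Y$, one has $m = k/2 - (n_X+n_Y)/4$. Here $k = n_X + n_Y - n_Z = (2n-3)+n-(2n-2) = n-1$, so $m = (n-1)/2 - (3n-3)/4 = (1-n)/4$. This is exactly the computation already displayed in the text preceding the theorem, so the proof of (a) is just to recall the definition of a conormal distribution, cite \cite[sec.~18.2]{Hormander3} (or the Guillemin--Sternberg reference \cite{GuilleminS}), verify $N^*Z$ is as in \eqref{MA_light_2} (a direct computation of the conormal bundle of the zero set of $x - z - t\theta$ subject to the constraints $z\perp\theta$, $|\theta'|=|\theta''|=1$), and plug in the dimensions. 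No obstacle here.

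For part (b), the Bolker condition is by definition the requirement that $\pi_{T^*\Sigma}\colon C \to T^*\Sigma\setminus 0$ be an injective immersion (equivalently, a local diffeomorphism onto its image together with global injectivity), and from \eqref{MA_light_6} this projection, in the parameters $(z,\theta,t,\xi)$ with $z,\xi\perp\theta$, sends $(z,\theta,t,\xi)$ to $(z,\theta,\xi,\, t(\xi-(\xi\cdot\theta)\theta))$. The only nontrivial content is whether, for fixed $(z,\theta,\xi)$, the map $t\mapsto t(\xi-(\xi\cdot\theta)\theta) =: t\,v$ is injective, and whether $dC \to dT^*\Sigma$ drops rank. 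The vector $v = \xi - (\xi\cdot\theta)\theta$ is the orthogonal projection of $\xi$ onto $\theta^\perp$; it is nonzero precisely when $\xi$ is not collinear with $\theta$. \emph{Case 1:} $v\neq 0$. Then $t\mapsto tv$ is a linear injection, and since the $(z,\theta,\xi)$ components are already free coordinates, $\pi_{T^*\Sigma}$ is an injective immersion near such points. \emph{Case 2:} $\xi\parallel\theta$, i.e.\ $v=0$. Then $\hat\theta = 0$ for every $t$, so the fiber $\{t\in\rb\}$ collapses to a single point of $T^*\Sigma$: injectivity fails (and the differential in the $t$-direction vanishes, so it is not even an immersion). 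The remaining step is to translate the dichotomy ``$\xi\parallel\theta$'' into ``$\xi\in\mathcal L$''. One direction is immediate: if $\xi = \lambda\theta$ with $\theta\in\sb^{n'-1}\times\sb^{n''-1}$ then $|\xi'| = |\lambda|\,|\theta'| = |\lambda| = |\lambda|\,|\theta''| = |\xi''|$, so $\xi\in\mathcal L$. Conversely, if $\xi\in\mathcal L$, i.e.\ $|\xi'|=|\xi''|\neq 0$, then $\theta = (\xi'/|\xi'|, \xi''/|\xi''|)$ lies in $\sb^{n'-1}\times\sb^{n''-1}$ and $\xi = |\xi'|\theta$, so $\xi$ is collinear with an admissible $\theta$; moreover such $(z,\theta,t,\xi)$ with this $\theta$ and any $z\perp\theta$, $t\in\rb$ genuinely lies in $C$ (one checks $\hat z = \xi \neq 0$, so the covector is nonzero). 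Hence the Bolker condition holds at a point of $C$ iff the base covector $\xi$ is off $\mathcal L$, which is the assertion; one should also note that whether Bolker holds can be phrased as a condition on $\xi$ alone because the failure set $\{\xi\parallel\theta\}$ only constrains $\xi$ up to the choice of $\theta$, and every $\xi\in\mathcal L$ admits such a bad $\theta$ while no $\xi\notin\mathcal L$ does.

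The main obstacle, such as it is, is purely expository: making precise the claim ``the Bolker condition holds iff $\xi\notin\mathcal L$'' when strictly speaking Bolker is a property of points of $C$, not of $\rb^n\setminus 0$. I would handle this by stating explicitly that $\pi_{T^*\Sigma}$ fails to be an injective immersion exactly over those $(z,\theta,t,\xi)\in C$ with $\xi$ collinear to $\theta$, and that the set of base points $\xi$ for which \emph{some} such bad fiber element exists is precisely $\mathcal L$ (by the two-line forward/converse argument above), while for $\xi\notin\mathcal L$ \emph{every} fiber element has $v\neq 0$. Everything else — the identification of $N^*Z$, the dimension count, the linear-injectivity observation — is routine and already essentially written out in the surrounding text.
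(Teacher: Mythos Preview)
Your overall strategy matches the paper's: part (a) is the dimension count already spelled out in the text, and part (b) is the injectivity analysis of $\pi_{T^*\Sigma}$ via \eqref{MA_light_6}. But there is a genuine internal inconsistency in your execution of (b). You correctly record that on $C$ the parameter $\xi$ satisfies $\xi\perp\theta$ (since $\hat z=\xi$ lives in $\theta^\perp$), yet you then read $v=\xi-(\xi\cdot\theta)\theta$ as the \emph{Euclidean} projection of $\xi$ onto $\theta^\perp$. Under the constraint $\xi\cdot\theta=0$ this gives $v=\xi\neq 0$ for every point of $C$, so your argument would show that Bolker \emph{never} fails, contradicting the conclusion. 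The resolution is that $\hat\theta$ is a covector on $\sb^{n'-1}\times\sb^{n''-1}$, not on a single sphere, so the projection must be taken componentwise: $v=\big(\xi'-(\xi'\cdot\theta')\theta',\ \xi''-(\xi''\cdot\theta'')\theta''\big)$, exactly the gradient computed earlier in the paper before \eqref{pphi}. Then $v=0$ means $\xi'\parallel\theta'$ and $\xi''\parallel\theta''$, and together with $\xi'\cdot\theta'+\xi''\cdot\theta''=0$ this forces $|\xi'|=|\xi''|$.

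This also breaks your converse construction. You pick $\theta=(\xi'/|\xi'|,\,\xi''/|\xi''|)$ and assert the resulting point lies in $C$; but $\xi\cdot\theta=|\xi'|+|\xi''|=2|\xi'|\neq 0$, so $\xi\notin\theta^\perp$ and the point is not in $C$ at all. The correct choice is $\theta=(\xi'/|\xi'|,\,-\xi''/|\xi''|)$ (or the analogous sign flip), giving $\xi\cdot\theta=|\xi'|-|\xi''|=0$ and $\xi'=|\xi'|\theta'$, $\xi''=-|\xi''|\theta''$, i.e.\ componentwise parallelism. With these two corrections your argument goes through and coincides with the paper's.
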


The theorem allows us to use pseudo-differential cutoffs either near the light cone, or near the image of it under $C$, as it is done in \cite{LOSU-Light_Ray, LOSU-strings} in the Lorentzian case to get recovery of not-lightlike singularities, say with partial measurements, or when the metric is not flat. The treatment of lightlike singularities is more delicate and requires special classes of FIOs corresponding to two cleanly intersecting Lagrangians, see \cite{Greenleaf_Uhlmann90, Greenleaf_UhlmannCM, Greenleaf-Uhlmann}, and the remark after Theorem~\ref{thm_co}. This is done in \cite{Yiran-Light-2021} for the light ray transform in Lorentzian geometry.  We do not pursue this direction in the present paper. 

\section{Comparison to the light ray transform in Minkowski spacetime}\label{minkowski}

Throughout this paper, we assumed $n'\ge2$, $n''\ge2$. The case of one of them being $1$ is the Minkowski case studied previously, as explained in the introduction. We review briefly some of the results for that case to compare them to the rest of the paper, see also \cite{S-Lorentz_analytic, LOSU-Light_Ray, SU-book}.

In the Minkowski case, we routinely call the dimension $1+d$ (not $n$ or $d$!), i.e., $(t,x)\in \rb^{1+d}$. Then $n'=1$, $n''=d$, and $n=1+d$. We also denote the dual variable by $(\tau,\xi)$.  The light ray transform is defined as
\be{MA_light_Ldef}
L f(z,\theta) = \int_\rb   f(s,z+s\theta)\,\d s, \quad (z,\theta) \in\rb^d\times \sb^{d-1}.
\ee
 Note that we do not have $(0,z)\perp(1,\theta)$ (but we could have chosen the initial point to be on $(1,\theta)^\perp$), and we integrate over future pointing light like rays, i.e., those in the direction of $(1,\theta)$, omitting $(-1,\theta)$ because that leads to the same kind of integral. This leads to some mismatch of the constants if we compare the formulas for $L'L$ in the previous sections when formally $n'=1$ there. Then $L'L$ is a convolution with
 \[
 \frac{\delta(t-|x|) +\delta(t+|x|) }{|x|^{d-1}},
 \]
compare with Theorem~\ref{thm_L''L}. We have
\[
p(\tau, \xi) =  2\pi|\sb^{d-2}|  \frac{(|\xi|^2-\tau^2)_+^\frac{d-3}2} {|\xi|^{d-2}}  .
 \]
 This replaces \eqref{eq:p} in this case. The multiplier $p$ is still positively homogeneous of degree $-1$, non-negative but it vanishes identically for $|\xi|<|\tau|$, i.e., in the timelike cone in $T^*\rb^{1+d}$. It is singular at the lightlike cone. As a result, $\hat f(\xi)$ in the timelike cone cannot be recovered stably, and timelike singularities with localized measurements are lost. This is in contrast to the case $n'\ge2$, $n''\ge2$ we study in this paper.

\providecommand{\bysame}{\leavevmode\hbox to3em{\hrulefill}\thinspace}
\providecommand{\MR}{\relax\ifhmode\unskip\space\fi MR }
\providecommand{\MRhref}[2]{%
  \href{http://www.ams.org/mathscinet-getitem?mr=#1}{#2}
}
\providecommand{\href}[2]{#2}


\begin{thebibliography}{10}

\bibitem{Andrews-SF}
George~E. Andrews, Richard Askey, and Ranjan Roy, \emph{Special functions},
  Encyclopedia of Mathematics and its Applications, vol.~71, Cambridge
  University Press, Cambridge, 1999. \MR{1688958}

\bibitem{Lauri-Light-21}
Ali Feizmohammadi, Joonas Ilmavirta, and Lauri Oksanen, \emph{The light ray
  transform in stationary and static {L}orentzian geometries}, J. Geom. Anal.
  \textbf{31} (2021), no.~4, 3656--3682. \MR{4236538}

\bibitem{Friedlander1998}
Friedrich~Gerard Friedlander and Mark~Suresh Joshi, \emph{Introduction to the
  theory of distributions}, Cambridge University Press, 1998.

\bibitem{Greenleaf-Uhlmann}
Allan Greenleaf and Gunther Uhlmann, \emph{Nonlocal inversion formulas for the
  {X}-ray transform}, Duke Math. J. \textbf{58} (1989), no.~1, 205--240.
  \MR{MR1016420 (91b:58251)}

\bibitem{Greenleaf_Uhlmann90}
\bysame, \emph{Composition of some singular {F}ourier integral operators and
  estimates for restricted {X}-ray transforms}, Ann. Inst. Fourier (Grenoble)
  \textbf{40} (1990), no.~2, 443--466. \MR{1070835 (91k:58126)}

\bibitem{Greenleaf_UhlmannCM}
\bysame, \emph{Microlocal techniques in integral geometry}, Integral geometry
  and tomography ({A}rcata, {CA}, 1989), Contemp. Math., vol. 113, Amer. Math.
  Soc., Providence, RI, 1990, pp.~121--135. \MR{1108649 (93d:44003)}

\bibitem{Guillemin85}
Victor Guillemin, \emph{On some results of {G}el'fand in integral geometry},
  Pseudodifferential operators and applications ({N}otre {D}ame, {I}nd., 1984),
  Proc. Sympos. Pure Math., vol.~43, Amer. Math. Soc., Providence, RI, 1985,
  pp.~149--155. \MR{812288 (87d:58137)}

\bibitem{GuilleminS}
Victor Guillemin and Shlomo Sternberg, \emph{Geometric asymptotics}, American
  Mathematical Society, Providence, R.I., 1977, Mathematical Surveys, No. 14.
  \MR{MR0516965 (58 \#24404)}

\bibitem{Hormander1}
Lars H{\"o}rmander, \emph{The analysis of linear partial differential
  operators. {I}}, vol. 256, Springer-Verlag, Berlin, 1983, Distribution theory
  and Fourier analysis. \MR{MR717035 (85g:35002a)}

\bibitem{Hormander3}
\bysame, \emph{The analysis of linear partial differential operators. {III}},
  vol. 274, Springer-Verlag, Berlin, 1985, Pseudodifferential operators.
  \MR{MR781536 (87d:35002a)}

\bibitem{Ilmavirta-pseudo}
Joonas Ilmavirta, \emph{X-ray transforms in pseudo-{R}iemannian geometry}, J.
  Geom. Anal. \textbf{28} (2018), no.~1, 606--626. \MR{3745873}

\bibitem{LOSU-strings}
Matti Lassas, Lauri Oksanen, Plamen Stefanov, and Gunther Uhlmann, \emph{On the
  {I}nverse {P}roblem of {F}inding {C}osmic {S}trings and {O}ther {T}opological
  {D}efects}, Comm. Math. Phys. \textbf{357} (2018), no.~2, 569--595.
  \MR{3767703}

\bibitem{LOSU-Light_Ray}
\bysame, \emph{The light ray transform on {L}orentzian manifolds}, Comm. Math.
  Phys. \textbf{377} (2020), no.~2, 1349--1379. \MR{4115019}

\bibitem{mazzucchelli2023Analytic}
Marco Mazzucchelli, Mikko Salo, and Leo Tzou, \emph{A general support theorem
  for analytic double fibration transforms}, arXiv:2306.05906 (2023).

\bibitem{Natterer-book}
F.~Natterer, \emph{The mathematics of computerized tomography}, B. G. Teubner,
  Stuttgart, 1986. \MR{MR856916 (88m:44008)}

\bibitem{OSSU-principal}
Lauri Oksanen, Mikko Salo, Plamen Stefanov, and Gunther Uhlmann, \emph{Inverse
  problems for real principal type operators}, Amer. J. Math. \textbf{146}
  (2024), no.~1, 161--240. \MR{4691487}

\bibitem{MR1004174}
Plamen Stefanov, \emph{Uniqueness of the multi-dimensional inverse scattering
  problem for time dependent potentials}, Math. Z. \textbf{201} (1989), no.~4,
  541--559. \MR{MR1004174 (90j:35160)}

\bibitem{S-support2014}
\bysame, \emph{Support theorems for the light ray transform on analytic
  {L}orentzian manifolds}, Proc. Amer. Math. Soc. \textbf{145} (2017), no.~3,
  1259--1274. \MR{3589324}

\bibitem{S-Lorentz_analytic}
\bysame, \emph{Boundary determination and local rigidity of analytic metrics in
  the lorentzian scattering rigidity problem}, arXiv:2404.15541 (2024).

\bibitem{SU-book}
Plamen Stefanov and Gunther Uhlmann, \emph{Microlocal {A}nalysis and {I}ntegral
  {G}eometry}, in progress.

\bibitem{Vasy-Wang-21}
Andr\'{a}s Vasy and Yiran Wang, \emph{On the light ray transform of wave
  equation solutions}, Comm. Math. Phys. \textbf{384} (2021), no.~1, 503--532.
  \MR{4252882}

\bibitem{wang2017parametrices}
Yiran Wang, \emph{Parametrices for the light ray transform on {M}inkowski
  spacetime}, Inverse Probl. Imaging \textbf{12} (2018), no.~1, 229--237.
  \MR{3810155}

\bibitem{Yiran-Light-2021}
\bysame, \emph{Microlocal analysis of the light ray transform on globally
  hyperbolic {L}orentzian manifolds}, arXiv:2104.08576 (2021).

\end{thebibliography}
\end{document}